\documentclass{article}
\usepackage{amsmath,amssymb} 
\usepackage{ascmac}
\usepackage{amsmath}
\usepackage{amsthm}
\theoremstyle{plain}
\newtheorem{thm}{Theorem}[subsection]

\newtheorem{lem}[thm]{Lemma}
\newtheorem{cor}[thm]{Corollary}
\theoremstyle{definition}
\newtheorem{dfn}[thm]{Definition}
\newtheorem{rem}[thm]{Remark}

\newtheorem{exam}[thm]{Example}

\usepackage{amsfonts}
\usepackage{array}
\usepackage{tikz}
\usepackage{tikz-cd} 
\usepackage{mathrsfs}
\usepackage{mathtools}
\usepackage{indentfirst}
\usepackage[nottoc,notbib]{tocbibind}
\usepackage{bbm}

\newcommand{\Hom}{\mathscr{H}\kern -2pt om}

\newcommand{\address}[1]{\vspace{1em}\noindent\textsc{#1}\par}
\newcommand{\email}[1]{\noindent\textit{Email address}: \texttt{#1}\par}

\renewenvironment{enumerate}
{
\begin{list}{$\mathrm{(\arabic{enumi})}$}
{
\usecounter{enumi}
\setlength{\topsep}{0em}
\setlength{\itemindent}{0em}
\setlength{\leftmargin}{3em} 
\setlength{\rightmargin}{0em}
\setlength{\labelsep}{0.5em} 
\setlength{\labelwidth}{3em} 
\setlength{\itemsep}{0em} 
\setlength{\parsep}{0em} 
\setlength{\listparindent}{1em}
}
}{
\end{list}
}

\usepackage[colorlinks=true,citecolor=magenta, urlcolor=blue, linkcolor=blue, pagebackref=false, hypertexnames=false]{hyperref}
\usepackage[margin=35mm]{geometry}

\begin{document}

\setlength{\abovedisplayskip}{5pt}
\setlength{\belowdisplayskip}{5pt} 

\title{The Integral Chow Ring of Smooth Non-strict Toric Stacks}
\author{Sota Suzuki}
\date{}

\maketitle

\begin{abstract}
 We extend the Stanley-Reisner ring to the non-strict stacky fans introduced by Geraschenko and Satriano.
 We then prove that this ring is isomorphic to the integral Chow ring of the smooth non-strict toric stack defined by the given non-strict stacky fan.
 This generalizes known results for the integral Chow rings of less general toric stacks.
\end{abstract}

\section{Introduction}

The notion of a \textit{toric stack} has been defined in several ways.
Toric stacks in the sense of \cite{Borisov_2004}, \cite{fmn2009smoothtoricdmstacks}, and \cite{Iwanari_2009} are smooth Deligne-Mumford stacks.
Definitions allowing singularities were given in \cite{Laf} and \cite{Tyomkin}, and a definition as a smooth Artin stack was given in \cite{empartial}.
A general definition that includes all of these was given in \cite{GS1}.  
In this paper, we work with non-strict toric stacks in the sense of \cite{GS1}.

The integral Chow ring of a quotient stack was defined in \cite{edidin-graham}.
For a Deligne-Mumford toric stack with a semi-projective coarse moduli space in the sense of \cite{Borisov_2004}, the integral Chow ring was computed in \cite{jiang2007integralorbifoldchowring},
using the result of \cite{iwanari2007integralchowringstoric}.
The case of Artin toric stacks with a trivial generic stabilizer, which are defined based on \cite{Borisov_2004}, was computed in \cite{integration}.

The purpose of this paper is to compute the integral Chow ring of a smooth non-strict toric stack in the sense of \cite{GS1}, which generalizes these previous results.
Before stating the main theorem, we recall some necessary definitions from \cite{GS1}.

A \textit{toric stack} is an Artin stack of the form $[X/G]$, where $X$ is a normal toric variety with a torus $T_0$ and $G$ is a subgroup of $T_0$.
Also, a \textit{non-strict toric stack} is an Artin stack which is isomorphic to an integral closed torus-invariant substack of a toric stack.
A \textit{non-strict stacky fan} $(\Sigma,\beta\colon L\to N)$ is a pair consisting of a fan $\Sigma$ on a lattice $L$ and a homomorphism $\beta$ of finitely generated abelian groups.
This determines a non-strict toric stack denoted by $\mathcal{X}_{\Sigma,\beta}$.

For a non-strict stacky fan $(\Sigma,\beta\colon L\to N)$ such that the toric variety $X_{\Sigma}$ associated to $\Sigma\subseteq L_\mathbb{Q}$ is smooth and has no torus factors, and $\mathrm{cok}(\beta)$ is finite, we define the \textit{Stanley-Reisner ring} $\mathrm{SR}(\Sigma,\beta)$ as follows.
We set $L\cong \mathbb{Z}^r$ and $N\cong \mathbb{Z}^d\oplus \mathbb{Z}/a_1\mathbb{Z}\oplus\cdots\oplus \mathbb{Z}/a_s\mathbb{Z}$.
Let $n = |\Sigma(1)|$ be the number of rays, and let $v_1,\dots,v_n\in L$ be the minimal ray generators. 
We denote by $B$ a lift of $\beta$, and let $Q$ be the $(d+s)\times s$ matrix
\begin{align}\label{matQ}
    Q =
      \begin{pmatrix}
      \multicolumn{3}{c}{\text{\Large $0$}}\\[-3ex]
      \multicolumn{3}{c}{}\\
      a_1 & & 0 \\
      & \ddots & \\
      0 & & a_s
    \end{pmatrix}
\end{align}
and let $F=(v_1,\dots,v_n)^T$. We then define an $(n+s)\times (d+s)$ block matrix by 
 \begin{align*}
    \begin{pmatrix}
      FB^T \\
      Q^T
    \end{pmatrix}
    =
    \left(
     \begin{array}{ccc|ccc}
       b_1 & \cdots & b_d & b_{d+1} & \cdots & b_{d+s} \\
       \hline
       0 & \cdots & 0 & a_1 & & 0 \\
       \vdots & \ddots & \vdots & & \ddots & \\
       0 & \cdots & 0 & 0 & & a_s
     \end{array}
   \right)
  \end{align*}
where $b_j$ is a column vector of length $n$.
We define ideals $I$, $J_1$, and $J_2$ of the ring $S=\mathbb{Z}[x_1,\dots,x_n,y_1,\dots,y_s]$ as follows:
\begin{align*}
    I&\coloneqq ( x_{\rho_1}\cdots x_{\rho_k} \mid \{v_{\rho_1},\dots,v_{\rho_k}\}\ \text{is\ not\ contained\ in\ any\ cone\ in}\ \Sigma )\\
    J_1&\coloneqq ( \langle (x_1,\dots,x_n), b_i \rangle \mid 1\leq i\leq d )\\
    J_2&\coloneqq ( \langle (x_1,\dots,x_n), b_{d+j}\rangle + a_jy_j \mid 1\leq j\leq s),
\end{align*}
where $\langle (x_1,\dots,x_n),c\rangle$ represents the formal inner product $\sum c_ix_i$ for a column vector $c=(c_1,\dots,c_n)^T$.

\begin{dfn}
  We define the \textit{Stanley-Reisner ring} $\mathrm{SR}(\Sigma,\beta)$ as the quotient ring
  \begin{align*}
    \mathrm{SR}(\Sigma,\beta) \coloneqq S/(I+J_1+J_2).
  \end{align*}
\end{dfn}

With this setup, we prove the following theorem:

\begin{thm}\label{mainthm}
  Let $(\Sigma,\beta\colon L\to N)$ be a non-strict stacky fan such that $X_\Sigma$ is smooth and has no torus factors. 
  \begin{enumerate}
    \item If $\mathrm{cok}(\beta)$ is finite, then we have
           \begin{align*}
            A^*(\mathcal{X}_{\Sigma,\beta})\cong \mathrm{SR}(\Sigma,\beta).
           \end{align*}
    \item If $\mathrm{cok}(\beta)$ is infinite, let $N_1$ be a subgroup of $N$ defined by
          \begin{align*}
            N_1=\mathrm{Sat}_N(\beta(L))=\{b\in N\mid n\cdot b\in \beta(L)\ \text{for\ some}\ n\in \mathbb{Z}_{>0}\},
          \end{align*}
          let $N_0$ be a direct complement of $N_1$ in $N$, and let $\beta_1\colon L\to N_1$ be the map induced by the factorization of $\beta$.
          Then we have
          \begin{align*}
            A^*(\mathcal{X}_{\Sigma,\beta})\cong \mathrm{SR}(\Sigma,\beta_1)[z_1,\dots,z_{\mathrm{rk}(N_0)}].
          \end{align*}
  \end{enumerate}
\end{thm}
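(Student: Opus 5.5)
Recall from \cite{GS1} that the smooth non-strict toric stack $\mathcal{X}_{\Sigma,\beta}$ is the quotient $[X_\Sigma/G_\beta]$. Here $X_\Sigma$ is the toric variety with torus $T_L=\mathrm{Hom}(L^\vee,\mathbb{G}_m)$. The group $G_\beta$ is the diagonalizable group $\mathrm{Hom}(\mathrm{DG}(\beta),\mathbb{G}_m)$, where $\mathrm{DG}(\beta)=\mathrm{cok}(\beta^*)$ in the derived sense. Concretely, using the free resolution $0\to\mathbb{Z}^s\xrightarrow{Q}\mathbb{Z}^{d+s}\to N\to 0$ and the lift $B$, we get $\mathrm{DG}(\beta)=\mathrm{cok}\big((B,Q)^*\colon \mathbb{Z}^{d+s}{}^\vee\to L^\vee\oplus \mathbb{Z}^{s\vee}\big)$. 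Equivalently, $G_\beta$ acts on $X_\Sigma\times\mathbb{G}_m^s$ and $\mathcal{X}_{\Sigma,\beta}\cong[(X_\Sigma\times \mathbb{G}_m^s)/T_{d+s}]$, where $T_{d+s}=\mathbb{G}_m^{d+s}$ acts through $B^T$ and $Q^T$. So my plan is to reduce to an equivariant Chow ring computation in the sense of \cite{edidin-graham}, namely $A^*(\mathcal{X}_{\Sigma,\beta})=A^*_{H}(X_\Sigma\times\mathbb{G}_m^s)$ for a suitable torus $H$.

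Since $X_\Sigma$ is smooth with no torus factors, it is the Cox quotient $X_\Sigma=(\mathbb{A}^n\setminus Z(\Sigma))/G_\Sigma$ with $G_\Sigma=\ker(\mathbb{G}_m^n\to T_L)$. Hence $\mathcal{X}_{\Sigma,\beta}\cong[(\mathbb{A}^n\setminus Z)\times\mathbb{G}_m^s/\widetilde G]$, where $\widetilde G$ is the kernel of the map $\mathbb{G}_m^n\times\mathbb{G}_m^s\to\mathbb{G}_m^{d+s}$ given on characters by the matrix $\begin{pmatrix}FB^T\\ Q^T\end{pmatrix}$. I will verify this carefully, possibly replacing $\widetilde G$ by a torus times a finite group.

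Rather than work with $\widetilde G$ directly, the cleaner route is to take the full torus $\mathbb{G}_m^{n+s}$ acting on $\mathbb{A}^n\times\mathbb{A}^s$. We have $A^*_{\mathbb{G}_m^{n+s}}(\mathbb{A}^{n+s})=S$. Removing $Z(\Sigma)\times\mathbb{A}^s$ imposes $I$ by the standard localization sequence argument for coordinate subspace arrangements; this is the integral Stanley--Reisner computation, as in \cite{iwanari2007integralchowringstoric}. Removing $\{y_j=0\}$ for each $j$ imposes $y_j=0$ in the $\mathbb{A}^s$ direction. Passing from $\mathbb{G}_m^{n+s}$ to $\widetilde G$ is handled by the fact that if $T'\subseteq T$ is the kernel of a surjection $T\to T''$ of tori, then $A^*_{T'}(Y)=A^*_T(Y\times T'')$. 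One then uses the exact sequence for a principal $\mathbb{G}_m$-bundle, $A^*_T(Y)\xrightarrow{c_1(\chi)}A^*_T(Y)\to A^*_T(Y_\chi)\to 0$, one character at a time. The characters quotiented out are exactly the columns of $\begin{pmatrix}FB^T\\Q^T\end{pmatrix}$, and their first Chern classes are $\langle x,b_i\rangle$ and $\langle x,b_{d+j}\rangle+a_jy_j$. Arranging the variables so that the $\mathbb{G}_m^s$-factor with coordinate $y_j$ is absorbed accordingly should give exactly $S/(I+J_1+J_2)$. Here finiteness of $\mathrm{cok}(\beta)$ is what guarantees that the $d+s$ characters form an injective map, i.e.\ the quotient $\mathbb{G}_m^{n+s}\to\mathbb{G}_m^{d+s}$ has the expected shape.

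The main obstacle is iterating the Gysin sequence integrally. For a surjection onto $\mathbb{G}_m^{d+s}$ the successive character maps are not onto individual $\mathbb{G}_m$-factors, and the relevant groups are not tori when there is torsion. I would handle this by changing bases of $\mathbb{Z}^{d+s}$ (Smith normal form) so that the map becomes a coordinate projection composed with an isogeny. For the finite kernel parts I would check directly that $c_1$ of the corresponding characters gives the relations with coefficients $a_j$; the presence of $y_j$ accounts for the torsion. One must also verify that the cokernel of multiplication by $c_1(\chi)$ is computed correctly: the Gysin sequence is only right exact, but that is all that is needed for presentations.

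For (2), I would use the splitting $N=N_1\oplus N_0$ to get $G_\beta\cong G_{\beta_1}\times\mathrm{Hom}(N_0^\vee\text{-part},\mathbb{G}_m)$. Concretely, $\mathcal{X}_{\Sigma,\beta}\cong\mathcal{X}_{\Sigma,\beta_1}\times B\mathbb{G}_m^{\mathrm{rk}N_0}$, since $N_0$ is free, being a complement of a saturated subgroup. Then the K\"unneth property $A^*(\mathcal{X}\times B\mathbb{G}_m^k)=A^*(\mathcal{X})[z_1,\dots,z_k]$ for quotient stacks, which follows from the projective bundle formula on finite approximations, combined with (1) applied to $\beta_1$ (whose cokernel is finite), finishes the proof.
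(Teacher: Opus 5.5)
Your part (2) matches the paper's argument. For part (1), the tools you use are sound: the identity $A^*_{T'}(Y)=A^*_T(Y\times T'')$, and the zero-section sequence applied one character at a time. The gap is the space you apply them to.

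\textbf{The space is wrong when $s>0$.} Then $[((\mathbb{A}^n\setminus Z)\times\mathbb{G}_m^s)/\widetilde G]$ is not $\mathcal{X}_{\Sigma,\beta}$. Since $\dim\widetilde G=n-d$, it has dimension $d+s$, whereas $\mathcal{X}_{\Sigma,\beta}$ has dimension $d$. Correspondingly, ``removing $\{y_j=0\}$ imposes $y_j=0$'' kills exactly the generators that carry the torsion. The relations $\langle x,b_{d+j}\rangle+a_jy_j$ collapse to $\langle x,b_{d+j}\rangle$, and no later ``absorbing'' can bring $y_j$ back. What your computation actually produces is $\mathbb{Z}[x_1,\dots,x_n]/(I+(\langle x,b_i\rangle)_{1\le i\le d+s})$. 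For the $\mathcal{P}(6,4)$ example this is $\mathbb{Z}[x_1,x_2]/(2x_1-3x_2,\,x_1-x_2,\,x_1x_2)\cong\mathbb{Z}$, instead of $\mathbb{Z}[t]/(24t^2)$. That $\mathbb{Z}$ is the Chow ring of $\mathbb{A}^2\setminus\{0\}$, which is what your stack is there, since $\widetilde G=\mathbb{G}_m$ acts with weights $(6,4,-1)$.

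The confusion comes from the description via $(\Sigma',\beta')$. There $X_{\Sigma'}=X_\Sigma\times\mathbb{A}^s$, and $\mathcal{X}_{\Sigma,\beta}$ is the \emph{closed} torus-invariant substack $V(\tau)=X_\Sigma\times\{0\}$, not the open locus $\{y\neq 0\}$. Restricting to a zero section is an isomorphism on equivariant Chow rings, so the $y_j$ must survive as free generators until the character relations are imposed.

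\textbf{The correct setup.} What is true is $\mathcal{X}_{\Sigma,\beta}\cong[(X_\Sigma\times T_{d+s})/(T_L\times\mathbb{G}_m^s)]$, with $\mathbb{G}_m^s$ acting trivially on $X_\Sigma$. This is not your first formula $[(X_\Sigma\times\mathbb{G}_m^s)/T_{d+s}]$, which has dimension $r-d$. Via the Cox quotient this gives $\mathcal{X}_{\Sigma,\beta}\cong[((\mathbb{A}^n\setminus Z)\times T_{d+s})/\mathbb{G}_m^{n+s}]\cong[(\mathbb{A}^n\setminus Z)/\widetilde G]$, where $\widetilde G$ acts only through $\mathbb{G}_m^{n+s}\to\mathbb{G}_m^n$. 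This $\widetilde G$ is the paper's $E=D(M)$ with $M=\mathrm{cok}\begin{psmallmatrix}FB^T\\Q^T\end{psmallmatrix}$. The paper reaches it through a pushout and the lemma on quotients by a freely acting subgroup; you would need to verify the same identification.

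Because $\mathbb{G}_m^s$ acts trivially, $A^*_{\mathbb{G}_m^{n+s}}(\mathbb{A}^n\setminus Z)=\mathbb{Z}[x,y]/I$, and your iterated zero-section argument then gives exactly $S/(I+J_1+J_2)$. No Smith normal form is needed. At every step the acting group is the full torus $\mathbb{G}_m^{n+s}$ on $(\mathbb{A}^n\setminus Z)\times\mathbb{G}_m^k\times\mathbb{A}^{d+s-k}$, so torsion never enters an intermediate group.

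\textbf{How the repaired route compares.} Repaired this way, your route genuinely differs from the paper's. The paper computes $A^*_E(\mathbb{A}^n)=A^*(BE)=\mathrm{Sym}^*(M)$ using its lemma on classifying spaces of diagonalizable groups, whose $\mu_a$ case is done by hand in the appendix. It then removes $V(\Sigma)$ $E$-equivariantly. Yours removes $Z$ over a torus first and produces the torsion relations $a_jy_j$ automatically, so it needs only $A^*(B\mathbb{G}_m^{n+s})$. Applied with $Y$ a point, it even reproves that lemma.
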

For an example that is not covered by previous results mentioned above, see \hyperref[5.2.2]{Example\,\ref*{5.2.2}}. 

We give a sketch of the proof.
Any non-strict toric stack can be written as $[X/G]$ for a normal toric variety $X$ and a diagonalizable group scheme $G$.
If $X$ is smooth and has no torus factors, the Cox construction yields an isomorphism $X\cong (\mathbb{A}^n\backslash Z)/H$, where $Z$ is a closed subscheme and $H$ is a group scheme acting on $\mathbb{A}^n$.
In this case, we construct a group scheme $E$ such that $[X/G]\cong [((\mathbb{A}^n\backslash Z)/H)/G]\cong [(\mathbb{A}^n\backslash Z)/E]$, and 
compute the integral Chow ring via the following exact sequence, as in the argument of \cite{integration}:
\begin{align*}
  A^*_E(Z)\to A^*_E(\mathbb{A}^n)\to A^*_E(\mathbb{A}^n\backslash Z)\to 0.
\end{align*}

In Section 2, we review the Cox construction and recall basic facts on toric stacks from \cite{GS1}.
In Section 3, we prove the main theorem.
In Section 4, we compute several examples.
Finally, in Appendix A, we provide the proof of \hyperref[diagsym]{Lemma\,\ref*{diagsym}}.

\section*{Acknowledgments}
 
 I would like to thank my supervisor, Professor Takehiko Yasuda, for valuable comments that improved this paper.

\section{Preliminaries}

Throughout this paper, we work over an algebraically closed field $k$ of characteristic $0$.

\subsection{Cox constructions}

In this subsection, we recall the Cox construction.
This construction is important for the proof of the main theorem.

Let $\Sigma$ be a fan in a lattice $L$, $n$ be the number of rays of $\Sigma$, and let $\{v_1,\dots,v_n\}$ be the minimal ray generators.
We say that the toric variety $X_\Sigma$ has a \textit{torus factor} if it is isomorphic to the product of a nontrivial torus and a toric variety of smaller dimension.
This is equivalent to the condition that the minimal ray generators $\{v_1,\dots,v_n\}$ do not span $L_{\mathbb{R}}$.

Let $L^*=\mathrm{Hom}(L,\mathbb{Z})$ and $F=(v_1,\dots,v_n)^T\colon L^*\to \mathbb{Z}^n$. 
Then the following theorem holds.

\begin{thm}\emph{\cite[Theorem\,4.1.3]{cox2024toric}}\label{cox4.1.3}
  The sequence
  \begin{align*}
    0\to L^*\xrightarrow{F} \mathbb{Z}^n\to \mathrm{Cl}(X_\Sigma)\to 0
  \end{align*}
  is exact if and only if $X_{\Sigma}$ has no torus factors.
\end{thm}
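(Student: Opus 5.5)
The plan is to prove this as a chain of equivalences. The key step is that exactness can fail only at the left-hand term, and that failure is detected by nonconstant invertible functions on $X_\Sigma$.

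\emph{Step 1: exactness at $\mathbb{Z}^n$ and at $\mathrm{Cl}(X_\Sigma)$ always holds.} Identify $\mathbb{Z}^n$ with the group of torus-invariant Weil divisors $\bigoplus_{\rho}\mathbb{Z}D_\rho$. Identify the map $F$ with $m\mapsto \mathrm{div}(\chi^m)=\sum_\rho \langle m,v_\rho\rangle D_\rho$.
\begin{itemize}
\item \emph{Surjectivity.} The complement of $\bigcup_\rho D_\rho$ is the torus $T$, and $\mathrm{Cl}(T)=0$ because $k[T]$ is a UFD. The excision sequence for class groups then shows that the $D_\rho$ generate $\mathrm{Cl}(X_\Sigma)$.
\item \emph{Exactness in the middle.} Suppose an invariant divisor $D$ is principal, say $D=\mathrm{div}(f)$. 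Then $f$ has no zeros or poles on $T$, so $f\in k[T]^*=k^*\cdot\{\chi^m\}$. Hence $D=\mathrm{div}(\chi^m)$ lies in the image of $F$.
\end{itemize}

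\emph{Step 2: injectivity of $F$ is equivalent to having no nonconstant units.} By Step 1, the sequence is exact if and only if $F$ is injective. The kernel of $F$ is $\{m\in L^* : \langle m,v_\rho\rangle=0 \text{ for all } \rho\}$, which is zero exactly when the $v_\rho$ span $L_\mathbb{R}$.

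Now use the same argument as in Step 1. A unit $f\in\mathcal{O}(X_\Sigma)^*$ restricts to a unit on $T$, so $f=c\chi^m$ with $\mathrm{div}(\chi^m)=0$, that is, $m\in\ker F$. Conversely, every $m\in\ker F$ gives a unit $\chi^m$. Therefore
\[
\mathcal{O}(X_\Sigma)^*/k^*\cong \ker F .
\]

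\emph{Step 3: link to torus factors.}
\begin{itemize}
\item \emph{If $X_\Sigma\cong X'\times(k^*)^m$ with $m>0$}, then a coordinate of the torus factor is a nonconstant unit. By Step 2, $\ker F\neq0$, so the sequence is not exact. Since Step 2 is intrinsic, this works for an abstract isomorphism, not only a toric one.
\item \emph{If the $v_\rho$ do not span $L_\mathbb{R}$}, let $L'=L\cap\mathrm{span}_\mathbb{R}(v_\rho)$. This is a saturated sublattice, so it has a complement $L''\neq0$ with $L=L'\oplus L''$. Every cone of $\Sigma$ lies in $L'_\mathbb{R}$, so $\Sigma$ is the fan $\Sigma'\times\{0\}$ in $L'\oplus L''$. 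This gives $X_\Sigma\cong X_{\Sigma'}\times T_{L''}$, which is a torus factor.
\end{itemize}
Combining the two directions yields the stated equivalence.

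I expect the main obstacle to be the direction ``torus factor $\Rightarrow$ not exact''. The definition only asks for an abstract isomorphism, so one cannot assume it is compatible with the fan. The unit-group description in Step 2 handles this without any reference to the toric structure.
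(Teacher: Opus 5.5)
Your argument is correct and is essentially the standard proof the paper relies on. The paper gives no proof of its own and cites Cox--Little--Schenck, Theorem 4.1.3, whose proof uses the same excision argument and the same description of units on the torus, together with their torus-factor criterion (Proposition 3.3.9), which is your Step 3. Your identification $\mathcal{O}(X_\Sigma)^*/k^*\cong\ker F$ also correctly covers the paper's definition of ``torus factor'' via an abstract, not necessarily equivariant, isomorphism.
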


Applying the Cartier dual $D(-)=\mathrm{hom_{gp}}(-,\mathbb{G}_m)$ to this short exact sequence of finitely generated abelian groups, 
we obtain the exact sequence of diagonalizable group schemes:
\begin{align*}
  0\to D(\mathrm{Cl}(X_{\Sigma}))\to \mathbb{G}_m^n\to T_L\to 0,
\end{align*}
where $T_L=D(L^*)$ is the torus whose lattice of $1$-parameter subgroups is naturally isomorphic to $L$.
We define $H\coloneqq D(\mathrm{Cl}(X_{\Sigma}))$.
The morphism in the above exact sequence induces an action of $H$ on $\mathbb{A}^n=\mathrm{Spec}\,k[x_1,\dots,x_n]$.
We define the ideal by
\begin{align*}
  B(\Sigma)=\Bigl( \prod_{v_i\notin \sigma(1)}x_i\mid\sigma\in \Sigma\Bigr)\subseteq k[x_1,\dots,x_n]
\end{align*}
and the closed subvariety $V(\Sigma)\coloneqq V(B(\Sigma))\subseteq \mathbb{A}^n$. 
Since the action of $H$ is free when $\Sigma$ is smooth by \cite[Exercise\,5.1.10]{cox2024toric}, we obtain the following theorem.

\begin{thm}\label{coxconst}
  If $X_{\Sigma}$ is smooth and has no torus factors, then $X_{\Sigma}\cong (\mathbb{A}^n\backslash V(\Sigma))/H$.
\end{thm}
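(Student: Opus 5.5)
The plan is to realize $X_\Sigma$ as a quotient by gluing affine pieces. I will show that the morphism $\pi\colon \mathbb{A}^n\backslash V(\Sigma)\to X_\Sigma$ is a good categorical quotient by $H$ that is locally of the form $\mathrm{Spec}\,R\to \mathrm{Spec}\,R^H$. Freeness of the action then upgrades this to a geometric quotient, which is what the notation $(\mathbb{A}^n\backslash V(\Sigma))/H$ means here.

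First, for each $\sigma\in\Sigma$ put $x^{\hat\sigma}=\prod_{v_i\notin\sigma(1)}x_i$. By definition of $B(\Sigma)$,
\begin{align*}
  \mathbb{A}^n\backslash V(\Sigma)=\bigcup_{\sigma\in\Sigma}U_\sigma, \qquad U_\sigma=\mathrm{Spec}\,k[x_1,\dots,x_n]_{x^{\hat\sigma}}.
\end{align*}
Each $U_\sigma$ is $H$-stable, since $x^{\hat\sigma}$ is homogeneous for the $\mathrm{Cl}(X_\Sigma)$-grading. This grading comes from the character map $\mathbb{Z}^n\to\mathrm{Cl}(X_\Sigma)$ in Theorem \ref{cox4.1.3}, and the $H$-action is exactly the corresponding grading.

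Next I compute invariants. Since $H=D(\mathrm{Cl}(X_\Sigma))$ is diagonalizable, taking $H$-invariants of $k[x]_{x^{\hat\sigma}}$ amounts to taking the degree-$0$ part of the grading. A Laurent monomial $x^{a}$ with $a\in\mathbb{Z}^n$ has degree $0$ if and only if $a=F(m)$ for some $m\in L^*$, by exactness of $0\to L^*\xrightarrow{F}\mathbb{Z}^n\to\mathrm{Cl}(X_\Sigma)\to0$. This step uses the no-torus-factor hypothesis. Such a monomial lies in the localization exactly when $\langle m,v_i\rangle\ge0$ for all $v_i\in\sigma(1)$, that is, when $m\in\sigma^\vee\cap L^*$. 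Hence
\begin{align*}
  k[x]_{x^{\hat\sigma}}^H\cong k[\sigma^\vee\cap L^*],
\end{align*}
so $U_\sigma/\!\!/H$ is the affine toric chart $X_\sigma$. These identifications are compatible with restriction to faces $\tau\preceq\sigma$, because $x^{\hat\tau}$ is $x^{\hat\sigma}$ times the $x_i$ with $v_i\in\sigma(1)\backslash\tau(1)$, and this matches the localization $k[\sigma^\vee\cap L^*]\to k[\tau^\vee\cap L^*]$. The affine quotients therefore glue to $\pi\colon \mathbb{A}^n\backslash V(\Sigma)\to X_\Sigma$. Since $H$ is linearly reductive (characteristic $0$, diagonalizable), each $U_\sigma\to X_\sigma$ is a good quotient, and hence so is $\pi$.

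Finally, I use smoothness. By the cited freeness of the $H$-action when $\Sigma$ is smooth (\cite[Exercise\,5.1.10]{cox2024toric}), all stabilizers are trivial, so all orbits have the same dimension and are closed in each $U_\sigma$. A good quotient whose orbits are all closed is a geometric quotient, giving $X_\Sigma\cong(\mathbb{A}^n\backslash V(\Sigma))/H$. One can see this very concretely in the smooth case. If $\sigma$ is smooth, then $v_i$ for $v_i\in\sigma(1)$ extend to a basis of $L$, and one checks that $U_\sigma\cong \mathbb{A}^{\dim\sigma}\times\mathbb{G}_m^{n-\dim\sigma}$ with $H$ acting freely on the torus factor. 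This shows that $\pi$ is even a principal $H$-bundle.

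The main obstacle is the invariant computation together with the gluing compatibility; the degree-$0$ identification via exactness of the Cox sequence is the crux. The freeness statement is imported as given.
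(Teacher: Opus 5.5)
Your proposal is correct and follows essentially the same route as the paper, which simply cites \cite[Theorem\,5.1.11(a)]{cox2024toric}, with freeness from \cite[Exercise\,5.1.10]{cox2024toric}. The proof of that cited result is your argument: identify $k[x]_{x^{\hat\sigma}}^H\cong k[\sigma^\vee\cap L^*]$ chart by chart, glue to a good quotient, and pass to a geometric quotient because all orbits are closed.

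One small precision: exactness of $L^*\to\mathbb{Z}^n\to\mathrm{Cl}(X_\Sigma)\to 0$ at $\mathbb{Z}^n$ holds for every fan. The no-torus-factor hypothesis is needed only for injectivity of $F$, that is, to make $\chi^m\mapsto x^{F(m)}$ an isomorphism rather than merely a surjection.
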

\begin{proof}
  It follows from \cite[Theorem\,5.1.11(a)]{cox2024toric}.
\end{proof}

The following lemma will be used in the proof of the main theorem.

\begin{lem}\emph{\cite[Lemma\,2.3]{integration}}\label{intlem}
  We have a primary decomposition
  \begin{align*}
    B(\Sigma) = \bigcap_{\{v_{\rho_1},\dots,v_{\rho_s}\}\nsubseteq \tau\in\Sigma} (x_{\rho_1},\dots,x_{\rho_k}).
  \end{align*}
\end{lem}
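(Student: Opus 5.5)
The plan is to prove the two inclusions directly, using only the fact that both sides are squarefree monomial ideals. For a subset $S\subseteq\{1,\dots,n\}$ write $P_S=(x_i\mid i\in S)$, and for $T\subseteq\{1,\dots,n\}$ write $x_T=\prod_{i\in T}x_i$. In this notation $B(\Sigma)$ is generated by the monomials $x_{\widehat\sigma}$ with $\widehat\sigma=\{i\mid v_i\notin\sigma(1)\}$ for $\sigma\in\Sigma$. The right-hand side is $\bigcap P_S$, taken over those $S$ such that $\{v_i\mid i\in S\}$ is not contained in any cone of $\Sigma$. (The indices $\rho_1,\dots,\rho_s$ and $\rho_1,\dots,\rho_k$ in the statement should be read as the same $k$.) Each $P_S$ is a monomial prime ideal. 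So once equality is shown, the displayed expression is automatically a primary (indeed prime) decomposition.

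\emph{Inclusion $\subseteq$.} Fix a cone $\sigma\in\Sigma$ and an admissible $S$. Since $\{v_i\mid i\in S\}\nsubseteq\sigma$, there is some $i\in S$ with $v_i\notin\sigma(1)$. Then $i\in\widehat\sigma$, so $x_i$ divides $x_{\widehat\sigma}$, and hence $x_{\widehat\sigma}\in P_S$. This holds for every generator and every admissible $S$, which gives the inclusion.

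\emph{Inclusion $\supseteq$.} An intersection of monomial ideals is again a monomial ideal, and an intersection of primes is radical. Hence the right-hand side is generated by squarefree monomials, and it suffices to treat a squarefree monomial $x_T$ lying in every admissible $P_S$. Put $S=\{1,\dots,n\}\setminus T$. Then $x_T\notin P_S$, because no variable of $x_T$ lies in $S$. So this $S$ cannot be admissible, i.e.\ $\{v_i\mid i\in S\}\subseteq\sigma$ for some $\sigma\in\Sigma$. Since the $v_i$ are the minimal ray generators, each $v_i\in\sigma$ spans a ray of $\sigma$, so in fact $S\subseteq\sigma(1)$. Consequently $\widehat\sigma\subseteq T$, so $x_{\widehat\sigma}$ divides $x_T$ and $x_T\in B(\Sigma)$.

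The only point needing care is the reduction in the second inclusion to squarefree monomials. This rests on the standard facts that intersections of monomial ideals are monomial and that an intersection of prime ideals is radical, where a radical monomial ideal is generated by squarefree monomials. It also uses the observation that a ray generator lying in a cone $\sigma$ is one of the generators of $\sigma(1)$, so that "contained in a cone" can be read as "contained in $\sigma(1)$". Everything else is bookkeeping with complements of index sets.
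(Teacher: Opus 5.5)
Your proof is correct. The paper gives no argument of its own for this lemma: it is quoted from the cited reference and used as a black box, so there is no in-text proof for yours to match or diverge from. Your double inclusion on squarefree monomials runs in two directions. The support $\widehat\sigma$ of each generator of $B(\Sigma)$ meets every non-face. Conversely, the complement of the support of a squarefree monomial lying in every admissible $P_S$ must consist of rays of a single cone. This is the standard elementary proof, the ideal-theoretic form of the familiar description of $V(\Sigma)$ as a union of coordinate subspaces indexed by non-faces. Compared with the citation, it makes the paper self-contained and shows that the equality holds with \emph{all} non-faces, as stated, not only the minimal ones.

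Two small corrections to the write-up. First, the fact that $v_i\in\sigma$ forces $\mathrm{cone}(v_i)\in\sigma(1)$ does not follow from minimality of the ray generators; it follows from the fan axiom. The set $\mathrm{cone}(v_i)\cap\sigma$ is a face of $\mathrm{cone}(v_i)$ containing $v_i\neq 0$, hence equals $\mathrm{cone}(v_i)$, which is therefore a face of $\sigma$. Second, the detour through radicality is unnecessary. A monomial $m$ lies in $P_S$ if and only if its support meets $S$. So $m$ lies in the right-hand side exactly when $x_{\mathrm{supp}(m)}$ does, and $x_{\mathrm{supp}(m)}$ divides $m$.

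Your reading of the indexing (the same $k$ throughout, and ``not contained in any cone of $\Sigma$'') is the intended one. As you note, the displayed decomposition is a prime decomposition but generally a redundant one. Keeping only the minimal admissible $S$, the primitive collections, gives the irredundant decomposition.
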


\subsection{Non-strict toric stacks and non-strict stacky fans}\label{nonst}

We recall the definitions and basic facts of non-strict toric stacks and non-strict stacky fans.

\begin{dfn}\cite[Definition\,1.1]{GS1}
  A \textit{non-strict toric stack} is an Artin stack which is isomorphic to an integral closed torus-invariant substack of a \textit{toric stack} $[X/G]$, where $X$ is a normal toric variety and $G$ is a subgroup of its torus.
\end{dfn}

\begin{dfn}\cite[Definition\,2.15]{GS1}
  A \textit{non-strict stacky fan} is a pair $(\Sigma,\beta)$, where $\Sigma$ is a fan on a lattice $L$, and $\beta\colon L\to N$ is a homomorphism of a finitely generated abelian group.
\end{dfn}

\begin{rem}
  If $N$ is also a lattice and $\mathrm{cok}(\beta)$ is finite, then $(\Sigma,\beta)$ is called a \textit{stacky fan} in \cite{GS1}.
\end{rem}

We construct the non-strict toric stack $\mathcal{X}_{\Sigma,\beta}$ from a non-strict stacky fan $(\Sigma,\beta\colon L\to N)$ as follows (see \cite{jiang2007integralorbifoldchowring} for details on homological algebra).
Let $\dot{L}$ and $\dot{N}$ be projective resolutions of the $\mathbb{Z}$-modules $L$ and $N$, respectively, and let $\mathrm{Cone}(\beta)$ be the mapping cone between them.
This yields the exact sequence
\begin{align}\label{longseq}
  0\to H^0((\mathrm{Cone}(\beta))^*)\to N^*\to L^*\to H^1((\mathrm{Cone}(\beta))^*)\to \mathrm{Ext}^1(N,\mathbb{Z})\to 0.
\end{align}
We define $G_{\beta}^i=D(H^i((\mathrm{Cone}(\beta))^*))$ for $i=0,1$ and $G_{\beta}=G_{\beta}^0\oplus G_{\beta}^1$.
Since the morphism $L^*\to H^1((\mathrm{Cone}(\beta))^*)$ determines  a morphism $G_{\beta}^1\to T_L$, the group $G_{\beta}$ acts on $X_{\Sigma}$.
We then define the non-strict toric stack $\mathcal{X}_{\Sigma,\beta}$ as the quotient stack $[X_{\Sigma}/G_{\beta}]$.

We can also explicitly determine $G_{\beta}^1$.
By fixing isomorphisms $L\cong \mathbb{Z}^r$ and $N\cong \mathbb{Z}^d\oplus\mathbb{Z}/a_1\mathbb{Z}\oplus\cdots\oplus\mathbb{Z}/a_s\mathbb{Z}$, we obtain projective resolutions
\begin{align*}
  &\dot{L} : 0\to \mathbb{Z}^r\to 0\\
  &\dot{N} : 0\to \mathbb{Z}^s\xrightarrow{Q}\mathbb{Z}^{d+s}\to 0,
\end{align*}
where $Q$ is a $(d+s)\times s$ integer matrix as \hyperref[matQ]{$(\ref*{matQ})$}.
Let $B\colon \mathbb{Z}^r\to \mathbb{Z}^{d+s}$ be an integer matrix which is a lift of $\beta$.
Then it follows that $G_{\beta}^1\cong D(\mathrm{cok}([B,Q]^*))$, and the map $L^*\to D(G_{\beta}^1)$ is given by the composition of the inclusion $(\mathbb{Z}^r)^*\to (\mathbb{Z}^{r+s})^*$ and the quotient map $(\mathbb{Z}^{r+s})^*\to \mathrm{cok}([B,Q]^*)$.

\begin{rem}\cite[Remark\,2.18]{GS1}\label{gs1rem2.18}
  If $\beta$ has a finite cokernel, we may describe $\mathcal{X}_{\Sigma,\beta}$ as a substack of a certain toric stack.
  Let $(\Sigma,\beta\colon L\to N)$ be a non-strict stacky fan such that $\beta$ has a finite cokernel, with $L\cong\mathbb{Z}^r$ and $N\cong \mathbb{Z}^d\oplus \mathbb{Z}/a_1\mathbb{Z}\oplus\cdots\oplus\mathbb{Z}/a_s\mathbb{Z}$.
  
  We define the fan $\Sigma'$ on $L\oplus \mathbb{Z}^s\cong \mathbb{Z}^{r+s}$ as follows.
  Let $\tau$ be the cone generated by the standard basis $e_1,\dots,e_s$ of $\mathbb{Z}^s$.
  For each $\sigma\in \Sigma$, let $\sigma'$ be the cone in $L\oplus \mathbb{Z}^s$ spanned by $\sigma$ and $\tau$.
  We then define $\Sigma'$ to be the fan generated by these cones $\sigma'$.
  This yields a closed subvariety $Y\coloneqq X_{\mathrm{Star}(\tau)}\cong V(\tau)\subseteq X_{\Sigma'}$, which is isomorphic to $X_{\Sigma}$.

  Let $B$ be a lift of $\beta$ and $Q$ an integer matrix as before, and let $\beta'=[B,Q]$.
  Then $\mathrm{cok}(\beta)\cong \mathrm{cok}(\beta')$.
  Since $\beta$ has finite cokernel, the pair $(\Sigma',\beta')$ is a stacky fan, which yields $\mathcal{X}_{\Sigma,\beta}\cong [Y/G_{\beta'}]\subseteq [X_{\Sigma'}/G_{\beta'}]=\mathcal{X}_{\Sigma',\beta'}$.
\end{rem}

\begin{rem}\label{gs1rem2.22}
  Let $(\Sigma,\beta\colon L\to N)$ be a non-strict stacky fan such that $\mathrm{cok}(\beta)$ is infinite.
  We can relate this to the case with a finite cokernel as follows.
  Let $N_1=\mathrm{Sat}_N(\beta(L))=\{b\in N\mid n\cdot b\in\beta(L)\ \text{for\ some}\ n\in \mathbb{Z}_{>0}\}$.
  Let $N_0\subseteq N$ be a direct complement of $N_1$, i.e., $N=N_0\oplus N_1$, and let $\beta_1\colon L\to N_1$.
  Then $\beta_1$ has a finite cokernel, and it follows from \cite[Remark\,2.22]{GS1} that $\mathcal{X}_{\Sigma,\beta}=\mathcal{X}_{\Sigma,\beta_1}\times B\mathbb{G}_m^{\mathrm{rk}(N_0)}$.
\end{rem}

\section{Integral Chow ring of smooth non-strict toric stacks}

\subsection{Proof of the main theorem}

In this subsection, we prove \hyperref[mainthm]{Theorem\,\ref*{mainthm}}.
Before giving the proof, we state two lemmas.

\begin{lem}\label{thirdiso}
  Let $0\to H\to E\to G\to 0$ be a short exact sequence of linear algebraic groups.
  Suppose that $E$ acts on a scheme $X$ and that $H$ acts freely on $X$ via the map $H\to E$. 
  Then
  \begin{align*}
    [X/E]\cong [(X/H)/(E/H)]\cong [(X/H)/G].
  \end{align*}
\end{lem}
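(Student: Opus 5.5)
We prove this through the universal property of quotient stacks: for any scheme $T$, we identify $E$-torsors on $T$ with equivariant maps to $X$, and compare them with $G$-torsors on $T$ with equivariant maps to $X/H$. First we settle the meaning of $X/H$. Since $H$ acts freely on $X$, the quotient stack $[X/H]$ is an algebraic space, and we denote it by $X/H$; in our applications it is the scheme of Theorem \ref{coxconst}. The $E$-action on $X$ descends to an action of $E/H\cong G$ on $X/H$, because $H$ is normal in $E$; all our groups are diagonalizable, hence abelian, so normality is automatic. The quotient $E/H$ exists as a group scheme and is identified with $G$ by the exactness of $0\to H\to E\to G\to 0$. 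This gives the second isomorphism $[(X/H)/(E/H)]\cong[(X/H)/G]$ immediately, so the real content is the first one.

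For the first isomorphism we construct a morphism $\Phi\colon[X/E]\to[(X/H)/G]$. Let $P\to T$ be an $E$-torsor with an $E$-equivariant map $f\colon P\to X$. We set $P'=P/H$. This is a $G$-torsor over $T$, since $P\to P/H$ is an $H$-torsor and the induced $E/H$-action on $P/H$ is free and transitive on fibres. Because $f$ is $H$-equivariant, it descends to a $G$-equivariant map $f'\colon P/H\to X/H$. The construction is compatible with pullback along $T'\to T$, so it defines a morphism of stacks.

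For the inverse $\Psi$, let $P'\to T$ be a $G$-torsor with a $G$-equivariant map $g\colon P'\to X/H$. We set $P=P'\times_{X/H}X$. The projection $P\to P'$ is an $H$-torsor, being the pullback of the $H$-torsor $X\to X/H$, and $P$ carries the $E$-action induced by the actions on $P'$ (through $E\to G$) and on $X$; this action is diagonal and commutes with the projections. The remaining tasks are the following.

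1. Show that $P\to T$ is an $E$-torsor. Working fppf-locally on $T$, we may assume $P'$ is trivial; then $P$ is an $H$-torsor over $G\times T$ compatible with $E\to G$, and we check that the action map $E\times P\to P\times_T P$ is an isomorphism.
2. Show that $\Phi\circ\Psi\cong\mathrm{id}$, using $P/H\cong P'$.
3. Show that $\Psi\circ\Phi\cong\mathrm{id}$, using the natural map $P\to (P/H)\times_{X/H}X$. This map is an $H$-equivariant morphism of $H$-torsors over $P/H$, and hence an isomorphism.

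We expect the main technical point to be step 1, the torsor condition for $P$, together with making precise that $X\to X/H$ really is an $H$-torsor, that is, that the action is free with a good quotient in the category of algebraic spaces. An alternative is to observe that $[X/E]\to[(X/H)/G]$ is obtained from the $H$-torsor $X\to X/H$ by base change along $BE\to BG$, whose fibre is $BH$-type data. The descent argument above is the cleanest route, and we would carry out steps 1–3 explicitly.
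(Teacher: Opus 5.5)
Your proof is correct, but it takes a different route from the paper. The paper gives no argument of its own: it cites Romagny's theory of group actions on stacks. In that framework $E/H$ acts on the quotient stack $[X/H]$ and the iterated quotient is compared with $[X/E]$; freeness serves only to identify $[X/H]$ with the algebraic space $X/H$.

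You instead work directly on $T$-points with the explicit functors $P\mapsto P/H=P\times^{E}G$ and $P'\mapsto P'\times_{X/H}X$. The only inputs are fppf descent and the fact that $X\to X/H$ is an $H$-torsor, and that is exactly where freeness enters. It makes $P'\times_{X/H}X\to P'$ an $H$-torsor, and it makes the comparison map in your step~3 a morphism of $H$-torsors over $P/H$.

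Your step~1 goes through as planned. After trivializing $P'\cong G\times T$, localize further so that $T\to X/H$ lifts to some $\tilde\xi\colon T\to X$. Then the map $E\times T\to P$, $(e,t)\mapsto((\bar e,t),e\,\tilde\xi(t))$, is an $E$-equivariant isomorphism, by freeness together with exactness of $0\to H\to E\to G\to 0$. The same local trivialization $P\cong E\times T$ is also the clean way to see that $P/H$ is a $G$-torsor, rather than ``free and transitive on fibres''.

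The citation buys brevity, and it does not depend on $[X/H]$ being an algebraic space, but it needs the 2-categorical machinery of actions on stacks. Your argument avoids that machinery entirely. It also makes visible what the paper's application actually requires: the Cox quotient $\mathbb{A}^n\setminus V(\Sigma)\to X_\Sigma$ must be an $H$-torsor, so that $X_\Sigma$ really is $[(\mathbb{A}^n\setminus V(\Sigma))/H]$.

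Drop the ``alternative'' in your last paragraph, because it is not right as phrased. Base-changing $[(X/H)/G]$ along $BE\to BG$ gives $[(X/H)/E]$, with $E$ acting through $G$. That stack is an $H$-gerbe over $[(X/H)/G]$, not $[X/E]$.
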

\begin{proof}
  This follows from Remark\,$2.4$ and the discussion at the beginning of Section\,$4$ of \cite{groupactiononstack}.
\end{proof}

\begin{lem}\label{diagsym}
  Let $G$ be a diagonalizable group scheme.
  Then we have $A^*(BG)\cong \mathrm{Sym}^*(D(G))$.
  More precisely, if $G$ corresponds to $\mathbb{Z}^r\oplus\mathbb{Z}/a_1\mathbb{Z}\oplus\cdots\oplus\mathbb{Z}/a_s\mathbb{Z}$, then
  \begin{align*}
    A^*(BG)\cong \mathbb{Z}[s_1,\dots,s_r,t_1,\dots,t_s]/(a_1t_1,\dots,a_st_s).
  \end{align*}
\end{lem}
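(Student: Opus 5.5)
The plan is to reduce to the two basic cases $G=\mathbb{G}_m$ and $G=\mu_a$, and then use a Künneth-type argument for products. A diagonalizable group scheme over $k$ whose character group is $\mathbb{Z}^r\oplus\mathbb{Z}/a_1\mathbb{Z}\oplus\cdots\oplus\mathbb{Z}/a_s\mathbb{Z}$ is isomorphic to $\mathbb{G}_m^r\times\mu_{a_1}\times\cdots\times\mu_{a_s}$. This holds because $D(-)$ turns direct sums into products, and $\mu_a$ is smooth since $\mathrm{char}\,k=0$.

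\textbf{Basic cases.} For $\mathbb{G}_m$ I use the Edidin--Graham approximation $U=\mathbb{A}^N\setminus\{0\}$ with the scaling action. Then $A^*(B\mathbb{G}_m)=\varinjlim A^*(\mathbb{P}^{N-1})=\mathbb{Z}[s]$, where $s$ is the first Chern class of the line bundle attached to the identity character.

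For $\mu_a$ I use the same $U$, with $\mu_a\subset\mathbb{G}_m$ acting by scaling. Then $U/\mu_a$ is the complement of the zero section in the total space of $\mathcal{O}(-a)$ over $\mathbb{P}^{N-1}$. The localization sequence
\begin{align*}
A^{*-1}(\mathbb{P}^{N-1})\xrightarrow{\cdot c_1(\mathcal{O}(a))}A^*(\mathbb{P}^{N-1})\to A^*(U/\mu_a)\to 0,
\end{align*}
together with homotopy invariance of the line bundle, gives $\mathbb{Z}[t]/(at)$ in degrees $<N$. Letting $N\to\infty$ gives $A^*(B\mu_a)=\mathbb{Z}[t]/(at)$.

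\textbf{Products.} I want $A^*(B(G_1\times G_2))\cong A^*(BG_1)\otimes A^*(BG_2)$ when $G_1$ is a torus. I plan to prove this inductively, adding one factor at a time. Take $U_1\times U_2$ as the approximation, where $U_1$ is an approximation for $G_1$ and $U_2$ one for $G_2$. Then $(U_1\times U_2)/(G_1\times G_2)=U_1/G_1\times U_2/G_2$.

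When $G_1=\mathbb{G}_m$, the space $U_1/G_1$ is a projective space, so the projective bundle formula gives the Künneth isomorphism. This reduces everything to a product of copies of $B\mu_{a_i}$.

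For $\mu_a\times G'$, I view $(U\times U')/(\mu_a\times G')$ as the complement of the zero section of a line bundle $\mathcal{L}$ on $\mathbb{P}^{N-1}\times U'/G'$. Here $\mathcal{L}$ is the pullback of $\mathcal{O}(-a)$ from the first factor. The same localization sequence then presents the Chow ring as
\begin{align*}
A^*(\mathbb{P}^{N-1}\times U'/G')/(a\,h),
\end{align*}
where $h$ is the hyperplane class. Applying the projective bundle formula and then induction yields
\begin{align*}
\mathbb{Z}[s_1,\dots,s_r,t_1,\dots,t_s]/(a_1t_1,\dots,a_st_s).
\end{align*}

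\textbf{Main obstacle.} I expect the hardest step to be this $\mu_a$ localization step. Exactness on the left is not needed, since only the cokernel matters. The care lies in checking that $c_1(\mathcal{L})$ is exactly $-a\,h$, pulled back from the projective-space factor, so that no other classes enter the relation.

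Finally, I identify the generators canonically: each character $\chi\in D(G)$ defines a line bundle on $BG$, and sending $\chi\mapsto c_1(L_\chi)$ is additive in $\chi$. This gives a ring map $\mathrm{Sym}^*(D(G))\to A^*(BG)$. Under it, the generators $s_i$ and $t_j$ are the images of the basis characters. The torsion relation $a_jt_j=0$ holds on both sides, because the relations in $D(G)$ are linear and $\mathrm{Sym}^*$ of the group $\mathbb{Z}^r\oplus\bigoplus_j\mathbb{Z}/a_j\mathbb{Z}$ is exactly the ring above. So the map is the isomorphism computed above.
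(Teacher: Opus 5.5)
Your proposal is correct. For the two basic cases it is the paper's argument. The paper also uses $\mathbb{A}^l\setminus\{0\}$ with the scaling action, identifies the quotients with $\mathbb{P}^{l-1}$ and with the punctured total space of $\mathcal{O}(-a)$, and obtains $at=0$ from $s^*s_*\alpha=c_1(\mathcal{O}(-a))\alpha$.

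You genuinely differ in the general case.

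\textbf{The paper's route.} It takes the product approximation $(\mathbb{A}^l\setminus\{0\})^{r+s}$ in one step. It writes $U/G$ as a product of $r$ copies of $\mathbb{P}^{l-1}$ and $s$ punctured line bundles. It then reads off the tensor-product presentation ``since these are toric varieties''. That step implicitly invokes a K\"unneth theorem for Chow groups of toric (linear) varieties. K\"unneth fails for Chow groups in general, and the paper gives no citation for it.

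\textbf{Your route.} You peel off one factor at a time:
\begin{enumerate}
\item For a $\mathbb{G}_m$ factor you use the trivial projective bundle formula $A^*(\mathbb{P}^{N-1}\times Y)=A^*(Y)[h]/(h^N)$.
\item For a $\mu_a$ factor you realize the quotient as the punctured total space of $\mathrm{pr}_1^*\mathcal{O}(-a)$ on $\mathbb{P}^{N-1}\times U'/G'$, and rerun the localization and self-intersection argument.
\end{enumerate}
This costs an induction. In exchange it uses only homotopy invariance, the projective bundle formula and the localization sequence, so it actually proves the K\"unneth step the paper takes for granted. The step you flag as the main obstacle is not one: $c_1(\mathrm{pr}_1^*\mathcal{O}(-a))=-ah$ is immediate from naturality of Chern classes.

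\textbf{Canonical identification.} Your last paragraph also does more than the paper. You build the map $\mathrm{Sym}^*(D(G))\to A^*(BG)$ via $\chi\mapsto c_1(L_\chi)$. You check that basis characters go to $\pm s_i,\pm t_j$, since the character line bundle on $U/\mu_a$ is the pullback of $\mathcal{O}(\pm 1)$ from $\mathbb{P}^{N-1}$. This gives a canonical identification with $\mathrm{Sym}^*(D(G))$, which the paper only asserts abstractly.

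\textbf{Two cosmetic corrections.}
\begin{enumerate}
\item The stabilization $A^i(B\mathbb{G}_m)=A^i(\mathbb{P}^{N-1})$ for $N>i$ runs along restriction maps. It is therefore an inverse limit, not $\varinjlim$.
\item Whether you write $c_1(\mathcal{O}(a))$ or $c_1(\mathcal{O}(-a))$ does not matter, since only the ideal $(at)$ enters.
\end{enumerate}
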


Although this lemma is well known, we could not find a proof in the literature.
We therefore include a proof in \hyperref[app]{Appendix A}.
We now turn to the proof of the main theorem.

\begin{proof}[Proof of Theorem\,\ref*{mainthm}]
  $(1)$ We first consider the case where $\mathrm{cok}(\beta)$ is finite.
  Let $(\Sigma,\beta\colon L\to N)$ be a non-strict stacky fan such that $X_\Sigma$ is smooth and has no torus factors, with $\mathrm{cok}(\beta)$ finite.
  We fix isomorphisms $L\cong\mathbb{Z}^r$ and $N\cong \mathbb{Z}^d\oplus\mathbb{Z}/a_1\mathbb{Z}\oplus\cdots\oplus\mathbb{Z}/a_s\mathbb{Z}$.
  Let $n$ be the number of rays of $\Sigma$ and let $\{v_1,\dots,v_n\}$ be the minimal ray generators.
  We set $F=(v_1,\dots,v_n)^T\colon L^*\to \mathbb{Z}$ and define
  \begin{align*}
    H\coloneqq D(\mathrm{cok}(F)).
  \end{align*}
  By \cite[Exercise\,5.1.10]{cox2024toric}, $H$ acts freely on $\mathbb{A}^n=\mathrm{Spec}k[x_1,\dots,x_n]$.
  Here, each variable $x_i$ corresponds to the ray of $\Sigma$ which is generated by $v_i$.
  It follows from \hyperref[coxconst]{Theorem\,\ref*{coxconst}} that $X_{\Sigma}\cong (\mathbb{A}^n\backslash V(\Sigma))/H$, where
  $V(\Sigma)$ is the closed subvariety of $\mathbb{A}^n$ defined by the ideal
  \begin{align*}
    B(\Sigma)=\Bigl(\prod_{v_i\notin \sigma(1)}x_i\mid \sigma\in\Sigma\Bigr)\subseteq k[x_1,\dots,x_n].
  \end{align*}
  Therefore, we have $\mathcal{X}_{\Sigma,\beta}=[X_{\Sigma}/G_{\beta}]\cong [((\mathbb{A}^n\backslash V(\Sigma))/H)/G_{\beta}]$.
  In the following, we construct a group scheme $E$ satisfying the short exact sequence $0\to H\to E\to G_{\beta}\to 0$ to apply \hyperref[thirdiso]{Lemma\,\ref*{thirdiso}}, and compute the Chow ring.

  Let $\beta'=[B,Q]\colon L^*\oplus \mathbb{Z}^s\to \mathbb{Z}^{d+s}$. The composition $\pi\colon L^*\hookrightarrow L^*\oplus \mathbb{Z}^s\to \mathrm{cok}(\beta'^*)$ induces a morphism $D(\mathrm{cok(\beta'^*)})\to D(L^*)$.
  Since $D(L^*)=T_L$ is the torus of $X_\Sigma$, this determines an action of $G_{\beta'}\coloneqq D(\mathrm{cok}(\beta'^*))$ on $X_\Sigma$.
  By \hyperref[gs1rem2.18]{Remark\,\ref*{gs1rem2.18}}, we obtain the isomorphism $\mathcal{X}_{\Sigma,\beta}\cong [X_{\Sigma}/G_{\beta'}]$.
  We define a finitely generated abelian group $M$ by the following pushout diagram:
  \begin{equation}
    \begin{tikzpicture}[auto,->, hookarrow/.style={{Hooks[right]}->}, baseline={([yshift=-0.5ex]current bounding box.center)}]
    \node (1) at (0,1.4) {$M$};
    \node (2) at (0,0) {$\mathbb{Z}^n$};
    \node (3) at (2,0) {$L^*.$};
    \node (4) at (2,1.4) {$\mathrm{cok}(\beta'^*)$};
    \draw (2) -- (1);
    \draw (4) -- (1);
    \draw (3) -- node{$\scriptstyle F$} (2);
    \draw (3) -- node[swap]{$\scriptstyle \pi$} (4);
    \end{tikzpicture} \notag
  \end{equation} 
  We compute $M$ as follows:
  \begin{align}
    M &= \mathbb{Z}^n\oplus \mathrm{cok}(\beta'^*)/\langle (F(u),-\pi(u))\mid u\in L^*\rangle \label{(1)}\\
      &\cong \mathbb{Z}^n\oplus L^*\oplus \mathbb{Z}^s/\langle (F(u),-u,0),(0,B^Tw,Q^Tw)\mid u\in L^*, w\in \mathbb{Z}^{d+s} \rangle \\
      &\cong \mathbb{Z}^n\oplus \mathbb{Z}^s/\langle (FB^Tw,Q^Tw)\mid w\in \mathbb{Z}^{d+s} \rangle .
  \end{align}
  Therefore, $M$ is the cokernel of the morphism 
   \begin{align*}
    \begin{pmatrix}
      FB^T \\
      Q^T
    \end{pmatrix}
    \colon \mathbb{Z}^{d+s}\to \mathbb{Z}^{n+s}.
  \end{align*}
  Since $F\colon L^*\to \mathbb{Z}^n$ is injective by \hyperref[cox4.1.3]{Theorem\,\ref*{cox4.1.3}}, the above push forward diagram implies that the morphism $\mathrm{cok}(\beta'^*)\to M$ is also injective.
  From \hyperref[(1)]{$(\ref*{(1)})$}, we identify the quotient $M/\mathrm{cok}(\beta'^*)$ with $\mathbb{Z}^n/\mathrm{im}(F)=\mathrm{cok}(F)$.
  Thus, we obtain the short exact sequence $0\to \mathrm{cok}(\beta'^*)\to M\to\mathrm{cok}(F)\to 0$.
  Let $E=D(M)$. By taking the Cartier dual, we obtain a short exact sequence of group schemes
  \begin{align*}
    0\to H\to E\to G_{\beta'}\to 0.
  \end{align*}
  Since the composition $\mathrm{cok}(\beta'^*)\to M\to \mathrm{cok}(F)$ is the zero map, by the universal property of the pushout, there exists a morphism $M\to \mathrm{cok}(F)$ such that $\mathbb{Z}^n\to \mathrm{cok}(F)$ factors as $\mathbb{Z}^n\to M\to \mathrm{cok}(F)$.
  Therefore, $H\to \mathbb{G}_m^n$ factors as $H\to E\to \mathbb{G}_m^n$.
  This morphism induces an action of $E$ on $\mathbb{A}^n\backslash V(\Sigma)$. Since $X_{\Sigma}$ is smooth, the action of $H$ via the composition $H\to E\to \mathbb{G}_m^n$ is free as above.
  Thus, it follows from \hyperref[thirdiso]{Lemma\,\ref*{thirdiso}} that we have an isomorphism $\mathcal{X}_{\Sigma,\beta}\cong [(\mathbb{A}^n\backslash V(\Sigma))/E]$.

  We now determine the equivariant Chow ring $A^*(\mathcal{X}_{\Sigma,\beta})=A_E^*(\mathbb{A}^n\backslash V(\Sigma))$. 
  The following argument is analogous to the proof of Proposition\,$2.3$ in \cite{integration}.
  Consider the exact sequence
  \begin{align*}
    A^*_E(V(\Sigma))\to A^*_E(\mathbb{A}^n)\to A^*_E(\mathbb{A}^n\backslash V(\Sigma))\to 0.
  \end{align*}
  Since $\mathbb{A}^n$ is a vector bundle over $\mathrm{pt}=\mathrm{Spec}(k)$, we have $A_E^*(\mathbb{A}^n)\cong A^*(BE)$.
  By \hyperref[diagsym]{Lemma\,\ref*{diagsym}}, we have $A^*(BE)\cong \mathrm{Sym}^*(M)$.
  Let $S=\mathbb{Z}[x_1,\dots,x_n,y_1,\dots,y_s]$ be the symmetric algebra of $\mathbb{Z}^n\oplus\mathbb{Z}^s$, where $x_1,\dots,x_n$ correspond to the rays of $\Sigma$, and $y_1,\dots,y_s$ correspond to the torsion part of $N$.
  Since the image of $\begin{psmallmatrix} FB^T\\Q^T \end{psmallmatrix}$ in $\mathbb{Z}^{n+s}$ is generated by its column vectors, we obtain $\mathrm{Sym}^*(M)\cong \mathbb{Z}[x_1,\dots,x_n,y_1,\dots,y_s]/(J_1+J_2)$, where $J_1$ and $J_2$ are the ideals given by
  \begin{align*}
    J_1&\coloneqq ( \langle (x_1,\dots,x_n), b_i \rangle \mid 1\leq i\leq d )\\
    J_2&\coloneqq ( \langle (x_1,\dots,x_n), b_{d+j}\rangle + a_jy_j \mid 1\leq j\leq s).
  \end{align*}
  By \hyperref[intlem]{Lemma\,\ref*{intlem}}, the image of the map $A_E^*(V(\Sigma))\to A_E^*(\mathbb{A}^n)$ is the ideal 
  \begin{align*}
    I\coloneqq ( x_{\rho_1}\cdots x_{\rho_k} \mid \{v_{\rho_1},\dots,v_{\rho_k}\}\ \text{is\ not\ contained\ in\ any\ cone\ in}\ \Sigma ).
  \end{align*}
  Therefore, we obtain $A^*(\mathcal{X}_{\Sigma,\beta})\cong A^*_E(\mathbb{A}^n\backslash V(\Sigma))\cong S/(I+J_1+J_2)=\mathrm{SR}(\Sigma,\beta)$.

  $(2)$ We consider the case where $\mathrm{cok}(\beta)$ is infinite.
  By \hyperref[gs1rem2.22]{Remark\,\ref*{gs1rem2.22}}, if we let $N_0$ be a direct complement of $N_1=\mathrm{Sat}_N(\beta(L))$ and $\beta_1\colon L\to N_1$ be the factorization of $\beta$, then the isomorphism $\mathcal{X}_{\Sigma,\beta}\cong \mathcal{X}_{\Sigma,\beta'}\times B\mathbb{G}_m^{\mathrm{rk}(N_0)}$.
  Since the cokernel of $\beta_1$ is finite, it follows from $(1)$ that $A^*(\mathcal{X}_{\Sigma,\beta})\cong A^*(\mathcal{X}_{\Sigma,\beta_1})\otimes A^*(B\mathbb{G}_m^{\mathrm{rk}(N_0)})\cong \mathrm{SR}(\Sigma,\beta_1)[z_1,\dots,z_{\mathrm{rk}(N_0)}]$.
\end{proof}

\begin{rem}
  Although distinct stacky fans may define the same toric stack, the resulting ring remains invariant by \cite[Theorem\,B.3]{GS1}.
\end{rem}

\begin{rem}\label{gerbest}
  Let us consider the variables $y_i$.
  Suppose that the non-strict stacky fan $(\Sigma,\beta)$ satisfies the assumptions in \hyperref[mainthm]{Theorem\,\ref*{mainthm}}.
  By \hyperref[gs1rem2.18]{Remark\,\ref*{gs1rem2.18}}, if we let $\tau$ be the cone generated by the standard basis of $\mathbb{Z}^s$, define $\Sigma'\subseteq L\oplus \mathbb{Z}^s$ as the fan determined by the cones generated by $\sigma\in \Sigma$ and $\tau$,
  and let $\beta'=[B,Q]\colon L\oplus \mathbb{Z}^s\to \mathbb{Z}^{d+s}$, then we obtain
  \begin{align*}
    \mathcal{X}_{\Sigma,\beta}\cong [V(\tau)/G_{\beta'}]\subseteq [X_{\Sigma'}/G_{\beta'}]=\mathcal{X}_{\Sigma',\beta'}.
  \end{align*}
  Since the rays of $\Sigma'$ are obtained by adding the standard basis of $\mathbb{Z}^s$ to the rays of $\Sigma$, defining the $(n+s)\times (r+s)$ matrix $F'$ as
  \begin{align*}
    F'=\begin{pmatrix}
      F & 0 \\0 & I_s
    \end{pmatrix},
  \end{align*}
  where $I_s$ is the identity of matrix of size $s$, yields 
  \begin{align*}
    \begin{pmatrix}
      F & 0 \\ 0 & I_s
    \end{pmatrix}
    \begin{pmatrix}
      B^T \\ Q^T
    \end{pmatrix}
    =
    \begin{pmatrix}
      FB^T \\ Q^T
    \end{pmatrix}.
  \end{align*}
  Therefore, the proof of \hyperref[mainthm]{Theorem\,\ref*{mainthm}} yields an isomorphism $\mathcal{X}_{\Sigma',\beta'}\cong [(\mathbb{A}^{n+s}\backslash V(\Sigma))/E]$, where
  \begin{align*}
    E=D(\mathrm{cok}\begin{psmallmatrix}FB^T\\Q^T\end{psmallmatrix}).
  \end{align*}
  Since we also have $\mathcal{X}_{\Sigma,\beta}\cong [(\mathbb{A}^n\backslash V(\Sigma))/E]$, letting $\mathcal{D}_i$ be the divisor of $\mathcal{X}_{\Sigma',\beta'}$ corresponding to $e_i\in \mathbb{Z}^{n+s}$ yields $\mathcal{X}_{\Sigma,\beta}=\mathcal{D}_{n+1}\cap \cdots \cap \mathcal{D}_{n+s}$.

  Now, viewing $\mathbb{Z}^{d+s}$ as a subgroup of $\mathbb{Z}^{n+s}$ via 
  \begin{align*}
    \begin{pmatrix}
      FB^T \\
      Q^T
    \end{pmatrix}
    =
    \left(
     \begin{array}{ccc|ccc}
       b_1 & \cdots & b_d & b_{d+1} & \cdots & b_{d+s} \\
       \hline
       0 & \cdots & 0 & a_1 & & 0 \\
       \vdots & \ddots & \vdots & & \ddots & \\
       0 & \cdots & 0 & 0 & & a_s
     \end{array}
   \right)
   \colon \mathbb{Z}^{d+s}\hookrightarrow \mathbb{Z}^{n+s}.
  \end{align*}
  We can see that $\mathbb{Z}^{d+s}\cap \mathbb{Z}^n$ is generated by $(b_1,\dots,b_d)$ in $\mathbb{Z}^{n+s}$.
  Let $\Sigma''=V(\tau)$ be the fan in $\mathbb{Z}^n$ and $\beta''=(b_1,\dots,b_d)^T\colon \mathbb{Z}^n\to \mathbb{Z}^{d+s}\cap \mathbb{Z}^n$ be the dual to the inclusion.
  If we denote the $k$-th component of the column vector $b_{d+j}$ by $b_{d+j,k}$, and 
  set $\mathcal{K}_i=\mathcal{D}_1^{-b_{d+i,1}}\otimes\cdots\otimes \mathcal{D}_n^{-b_{d+i,n}}$, we obtain the isomorphism
  \begin{align*}
    \mathcal{X}_{\Sigma,\beta}\cong \sqrt[(a_1,\dots,a_r)]{(\mathcal{K}_1,\dots,\mathcal{K}_s)/\mathcal{X}_{\Sigma'',\beta''}}
  \end{align*}
  from the proof of \cite[Proposition\,7.20]{GS1}. Note that we have isomorphisms $\mathcal{D}_{n+i}^{a_i}\cong \mathcal{K}_i$ for $1\le i \le s$.
  Moreover, let $\bar{\beta}\colon L\to N/\text{tor}$ be the morphism of lattices induced by $\beta$, then we have $\mathcal{X}_{\Sigma'',\beta''}\cong \mathcal{X}_{\Sigma,\tilde{\beta}}$.
  By \hyperref[mainthm]{Theorem\,\ref*{mainthm}}, we have
  \begin{align*}
    A^*(\mathcal{X}_{\Sigma,\beta})&\cong \mathbb{Z}[x_1,\dots,x_n,y_1,\dots,y_s]/(I+J_1+J_2)\\
    A^*(\mathcal{X}_{\Sigma,\bar{\beta}})&\cong \mathbb{Z}[x_1,\dots,x_n]/(I+J_1),
  \end{align*}
  where these ideals are determined from the non-strict stacky fan $(\Sigma,\beta)$.
  Therefore, we see that the variables $y_i$ correspond to the line bundles $\mathcal{D}_{n+i}$, and the ideal $J_2$ corresponds to the relations $\mathcal{L}_{n+i}^{a_i}\cong\mathcal{K}_i$.
  
\end{rem}

\subsection{The integral Chow ring of fantastacks}

In this subsection, we provide a formula for the integral Chow ring of a fantastack.

Let $\Sigma$ be a fan on a lattice $N$, and let $\beta\colon \mathbb{Z}^n\to N$ be a homomorphism with finite cokernel.
Assume that every ray of $\Sigma$ contains some $\beta(e_i)$ and every $\beta(e_i)$ is contained in the support of $\Sigma$.
For each cone $\sigma\in\Sigma$, we set $\hat{\sigma}=\mathrm{cone}(\{e_i\mid\beta(e_i)\in\sigma\})$.
Let $\hat{\Sigma}$ denote the fan on $\mathbb{Z}^n$ generated by these cones $\hat{\sigma}$.
We define $\mathcal{F}_{\Sigma,\beta}\coloneqq \mathcal{X}_{\hat{\Sigma},\beta}$.

\begin{dfn}\cite[Definition\,4.1]{GS1}
  Any toric stack isomorphic to some $\mathcal{F}_{\Sigma,\beta}$ is called a \textit{fantastack}.
\end{dfn}

Note that by defining the ideal 
\begin{align*}
  J_\Sigma = \Bigl(\prod_{\beta(e_i)\notin \sigma}x_i \Bigm\vert \sigma\in\Sigma\Bigr),
\end{align*}
we have the isomorphism $X_{\Sigma}\cong \mathbb{A}^n\backslash V(J_{\Sigma})$. 
Since $\mathrm{cok}(\beta)$ is finite, $G_{\beta}=D(\mathrm{cok}(\beta^*))$.
Let $\mathbb{Z}^n\to \mathrm{cok}(\beta^*)$ be the cokernel of $\beta^*$ and let $g_i$ be the image of $e_i$.
Then we have $\mathcal{F}_{\Sigma,\beta}=[(\mathbb{A}^n\backslash V(J_{\Sigma}))/_{(g_1\,\cdots\,g_n)}D(\mathrm{cok}(\beta^*))]$.

\begin{cor}\label{corfanta}
  Let $\beta\colon \mathbb{Z}^n\to N=\mathbb{Z}^d$ be a homomorphism, $\Sigma$ be a fan on the lattice $N$ such that $\mathcal{F}_{\Sigma,\beta}$ is a fantastack. Then
  \begin{align*}
    A^*(\mathcal{F}_{\Sigma,\beta})\cong \mathbb{Z}[x_1,\dots,x_n]/(I_\beta+J),
  \end{align*}
  where
  \begin{align*}
    I_\beta&=(x_{\rho_1}\cdots x_{\rho_k} \mid \{\beta(e_{\rho_1}),\dots,\beta(e_{\rho_k})\}\ \textnormal{is\ not\ contained\ in\ any\ cone\ in}\ \Sigma) \\
    J&=\left(\sum_{i=1}^{n}\langle u,\beta(e_i)\rangle x_i \,\middle|\, u\in N^*\right).
  \end{align*}
\end{cor}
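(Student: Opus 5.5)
The plan is to deduce the corollary from Theorem~\ref{mainthm}(1), applied to the non-strict stacky fan $(\hat{\Sigma},\beta\colon\mathbb{Z}^n\to N)$. By definition $\mathcal{F}_{\Sigma,\beta}=\mathcal{X}_{\hat{\Sigma},\beta}$. Here $L=\mathbb{Z}^n$ and $N=\mathbb{Z}^d$ is torsion-free, so $s=0$. In this situation there are no variables $y_j$, the ideal $J_2$ is empty, and $Q$ is the empty matrix. Since $\mathrm{cok}(\beta)$ is finite, part~(1) applies once the hypotheses on $X_{\hat{\Sigma}}$ are verified.

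The first step is to check that $X_{\hat{\Sigma}}$ is smooth and has no torus factors. The rays of $\hat{\Sigma}$ are exactly $\mathrm{cone}(e_i)$ for $i=1,\dots,n$. Each $\beta(e_i)$ lies in the support of $\Sigma$, so it lies in some $\sigma$; hence $e_i\in\hat{\sigma}$, and $\mathrm{cone}(e_i)$ is a face of that cone. Every cone $\hat{\sigma}$ is generated by a subset of the standard basis, so it is smooth. The minimal ray generators $e_1,\dots,e_n$ span $\mathbb{R}^n$, so there is no torus factor. Because the $e_i$ are the minimal ray generators, the matrix $F$ is the identity $I_n$.

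Next I would identify the ideals. With $F=I_n$ and $B$ the $d\times n$ matrix of $\beta$, the matrix $FB^T=B^T$ has columns $b_i=(\langle e_i^*,\beta(e_1)\rangle,\dots,\langle e_i^*,\beta(e_n)\rangle)^T$, where $e_i^*$ is the dual basis of $N^*$. Hence $J_1$ is generated by $\sum_k\langle e_i^*,\beta(e_k)\rangle x_k$ for $i=1,\dots,d$. By linearity in $u$, this equals the ideal $J$ of the statement. For $I$, I need that a set $\{e_{\rho_1},\dots,e_{\rho_k}\}$ lies in some cone of $\hat{\Sigma}$ if and only if $\{\beta(e_{\rho_1}),\dots,\beta(e_{\rho_k})\}$ lies in some cone of $\Sigma$.

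The "if" direction is immediate from the definition of $\hat{\sigma}$. For the converse, the cones of $\hat{\Sigma}$ are the $\hat{\sigma}$ together with their faces. If the $e_{\rho_j}$ lie in a face of $\hat{\sigma}$, they lie in $\hat{\sigma}$ itself. Since $\hat{\sigma}$ is a simplicial cone on basis vectors, the basis vectors it contains are exactly its generators. So each $\beta(e_{\rho_j})\in\sigma$. This identifies $I$ with $I_\beta$, and Theorem~\ref{mainthm}(1) then gives the stated isomorphism.

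I expect the main subtlety to be this combinatorial identification of $I$ with $I_\beta$. It needs care about what "the fan generated by the cones $\hat{\sigma}$" means: one must make sure no cone of $\hat{\Sigma}$ other than a face of some $\hat{\sigma}$ appears, and that $\hat{\Sigma}$ really is a fan. I would take the latter as given, since it is implicit in the definition of a fantastack.
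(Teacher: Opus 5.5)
Your proposal is correct and follows the paper's argument: the paper's proof says only that the corollary follows from $\mathcal{F}_{\Sigma,\beta}=\mathcal{X}_{\hat{\Sigma},\beta}$ and Theorem~\ref{mainthm}. You have supplied the routine checks that proof leaves implicit, namely that $X_{\hat{\Sigma}}$ is smooth with no torus factors, that $F=I_n$ and $s=0$, that $J_1=J$, and that $I=I_\beta$.
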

\begin{proof}
  This follows from the definition of the fantastack and \hyperref[mainthm]{Theorem\,\ref*{mainthm}}.
\end{proof}

\section{Examples}

\begin{exam}\label{5.2.1}
  Let $\Sigma$ be the fan in $L=\mathbb{Z}^2$ consisting of the origin and the rays generated by $e_1$ and $e_2$, which corresponds to $\mathbb{A}^2\backslash \{(0,0)\}$, 
  let $N=\mathbb{Z}\oplus\mathbb{Z}/2\mathbb{Z}$, and let $\beta$ be a morphism $\begin{psmallmatrix}
    2 & -3 \\ 1 & -1
  \end{psmallmatrix}\colon \mathbb{Z}^2\to \mathbb{Z}\oplus \mathbb{Z}/2\mathbb{Z}$.
   From the diagram with the exact row
  \begin{equation}
            \begin{tikzpicture}[auto,->, hookarrow/.style={{Hooks[right]}->}, baseline={([yshift=-0.5ex]current bounding box.center)}]
            \node (1) at (0,1) {$\mathbb{Z}^2$};
            \node (2) at (2.5,1) {$\mathbb{Z}^3$};
            \node (3) at (5,1) {$\mathbb{Z}$};
            \node (4) at (6,1) {$0,$};
            \node (7) at (2.5,0) {$\mathbb{Z}^2$};
            \draw (1) -- node{$\scriptstyle \begin{psmallmatrix} 2 & 1 \\ -3 & -1 \\ 0 & 2 \end{psmallmatrix}$} (2);
            \draw (2) -- node{$\scriptstyle \begin{psmallmatrix} 6 & 4 & -1 \end{psmallmatrix}$} (3);
            \draw (3) -- (4);
            \draw (7) -- node[swap]{$\scriptstyle \begin{psmallmatrix} 6 & 4 \end{psmallmatrix}$}(3);
            \draw[hookarrow] (7) -- (2);
            \end{tikzpicture} \notag 
  \end{equation}
  we obtain $\mathcal{X}_{\Sigma,\beta}=[(\mathbb{A}^2\backslash \{(0,0)\})/_{(6\ 4)}\mathbb{G}_m]=\mathcal{P}(6,4)$.
  Here, we used the notation $[X/_{(g_1\,\cdots\,g_n)}G]$ for $[X/G]$ with a $\mathbb{G}_m^n$-invariant $X\subseteq \mathbb{A}^n$,
  where $g_i$ denotes the image of $e_i$ under the morphism $\mathbb{Z}^n\to D(G)$ corresponding to $G\to \mathbb{G}_m^n$, as in \cite[Notation\,2.8]{GS1}.
  In this case, since $F=\mathrm{id}_L$, $B=\beta$, and $Q=(0\ 2)^T$, we have
  \begin{align*}
    \begin{pmatrix}
      FB^T \\ Q^T
    \end{pmatrix}
    = 
    \begin{pmatrix}
      2 & 1 \\ -3 & -1 \\ 0 & 2
    \end{pmatrix}.
  \end{align*}
  Therefore, \hyperref[mainthm]{Theorem\,\ref*{mainthm}} implies
  \begin{align*}
    A^*(\mathcal{X}_{\Sigma,\beta})=\mathbb{Z}[x_1,x_2,y_1]/(2x_1-3x_2, x_1-x_2+2y_1, x_1x_2)\cong\mathbb{Z}[t]/(24t^2).
  \end{align*}
  This is a well-known result.
\end{exam}

\begin{exam}\label{5.2.2}
  Let $L=\mathbb{Z}^3$, $N=\mathbb{Z}\oplus \mathbb{Z}/2\mathbb{Z}$, $\beta=\begin{psmallmatrix}  3 & -1 & 0 \\ 4 & 0 & -1 \\ 0 & -1 & 1 \end{psmallmatrix}\colon \mathbb{Z}^3\to \mathbb{Z}^2\oplus\mathbb{Z}/2\mathbb{Z}$ 
  and let $\Sigma$ be the fan in $L$ consisting of the maximal cones $\mathrm{Cone}(e_1,e_2,v)$, $\mathrm{Cone}(e_2,e_3,v)$, and $\mathrm{Cone}(e_3,e_1,v)$ together with their faces, where $e_i$ are the standard basis of $N$ and $v=e_1+e_2+e_3$:
  \begin{equation}
  \begin{tikzpicture}
    \fill[gray!55] (1.65,1.25) -- (1.5,2.5) -- (0.3,0.2);
    \fill[gray!40] (1.65,1.25) -- (1.5,2.5) -- (3,0.8);
    \fill[gray!25] (1.65,1.25) -- (0.3,0.2) -- (3,0.8);
    \node at (0.3,0.2) {$\bullet$};
    \node at (1.5,2.5) {$\bullet$};
    \node at (3,0.8) {$\bullet$};
    \node at (1.8,1.5) {$\bullet$};
    \draw[->] (1.5,1) -- (1.5,3.2);
    \draw[->] (1.5,1) -- (0,0);
    \draw[->] (1.5,1) -- (3.38,0.76);
    \draw[->] (1.5,1) -- (2.4,2.5);
    \draw[densely dotted] (1.5,2.5) -- (3,2.3) -- (3,0.8);
    \draw[densely dotted] (1.5,2.5) -- (0.3,1.7) -- (0.3,0.2);
    \draw[densely dotted] (0.3,1.7) -- (1.8,1.5) -- (3,2.3);
    \draw[densely dotted] (0.3,0.2) -- (1.8,0) -- (3,0.8);
    \draw[densely dotted] (1.8,0) -- (1.8,1.6);
    \draw (1.5,2.5) -- (1.65,1.25) -- (0.3,0.2);
    \draw (1.65,1.25) -- (3,0.8);
    \draw (0.3,0.2) -- (3,0.8) -- (1.5,2.5) --(0.3,0.2);
    \node at (-0.5,2) {\phantom{t}};
  \end{tikzpicture}\notag
  \end{equation}
  In this case, $X_\Sigma$ is the blowing-up of $\mathbb{A}^3$ at the origin. From the diagram with the exact row
  \begin{equation}
            \begin{tikzpicture}[auto,->, hookarrow/.style={{Hooks[right]}->}, baseline={([yshift=-0.5ex]current bounding box.center)}] 
            \node (1) at (0,1) {$\mathbb{Z}^3$};
            \node (2) at (2.5,1) {$\mathbb{Z}^4$};
            \node (3) at (5,1) {$\mathbb{Z}$};
            \node (4) at (6,1) {$0,$};
            \node (7) at (2.5,0) {$\mathbb{Z}^3$};
            \draw (1) -- node{$\scriptstyle \begin{psmallmatrix} 3 & 4 & 0 \\ -1 & 0 & -1 \\ 0 & -1 & 1 \\ 0 & 0 & 2 \end{psmallmatrix}$} (2);
            \draw (2) -- node{$\scriptstyle \begin{psmallmatrix} 2 & 6 & 8 & -1 \end{psmallmatrix}$} (3);
            \draw (3) -- (4);
            \draw (7) -- node[swap]{$\scriptstyle \begin{psmallmatrix} 2 & 6 & 8 \end{psmallmatrix}$}(3);
            \draw[hookarrow] (7) -- (2);
            \end{tikzpicture} \notag 
  \end{equation}
  we obtain $\mathcal{X}_{\Sigma,\beta}=[X_\Sigma/_{\begin{psmallmatrix} 2 & 6 & 8 \end{psmallmatrix}}\mathbb{G}_m]$. In this setting, we have
  \begin{align*}
    F=\begin{pmatrix}
      1 & 0 & 0 \\ 0 & 1 & 0 \\ 0 & 0 & 1 \\ 1 & 1 & 1
    \end{pmatrix},\quad
    B=\beta,\quad
    Q=\begin{pmatrix}
      0 \\ 0 \\ 2
    \end{pmatrix},\quad \text{and thus}\quad
    \begin{pmatrix}
      FB^T \\ Q^T
    \end{pmatrix}
    = 
    \begin{pmatrix}
      3 & 4 & 0 \\ -1 & 0 & -1 \\ 0 & -1 & 1 \\ 2 & 3 & 0 \\ 0 & 0 & 2
    \end{pmatrix}.
  \end{align*}
  Therefore, $I=(x_1x_2x_3)$, $J_1=(3x_1-x_2+2x_4,4x_1-x_3+3x_4)$, and $J_2=(-x_2+x_3+2y_1)$. By \hyperref[mainthm]{Theorem\,\ref*{mainthm}}, we obtain
  \begin{align*}
    A^*(\mathcal{X}_{\Sigma,\beta})&\cong\mathbb{Z}[x_1,x_2,x_3,x_4,y_1]/(I+J_1+J_2) \\
                                   &\cong\mathbb{Z}[x_4,y_1]/((x_4+2y_1)(x_4+6y_1)(x_4+8y_1)).
  \end{align*}
  This reflects the fact that since $X_\Sigma$ is a vector bundle $\mathcal{O}_{\mathbb{P}^2}(-1)$ over $\mathbb{P}^2$, $A^*(\mathcal{X}_{\Sigma,\beta})$ is isomorphic to $A^*([\mathbb{P}^2/\mathbb{G}_m])$, where $\mathbb{G}_m$ acts on $\mathbb{P}^2$ with weight $2, 6$, and $8$.
  
  The formulas in \cite{iwanari2007integralchowringstoric,jiang2007integralorbifoldchowring,integration} are not applicable to this Artin stack because it is not Deligne-Mumford and its generic stabilizer is non-trivial.
\end{exam}

\begin{exam}\label{5.2.3}
  Let $N=\mathbb{Z}^r\oplus \bigoplus\nolimits_{i=1}^s(\mathbb{Z}/a_i\mathbb{Z})$, let $\Sigma$ be the trivial fan on $L=0$, and let $\beta\colon L\to N$ be the zero map, as in \cite[Example\,4.14]{GS1}.
  Then we have $X_\Sigma=\mathrm{Spec}\,k$. Since the sequence
  \begin{align*}
    0\to H^0(\mathrm{Cone}(\beta)^*)\to \mathbb{Z}^r\to 0\to H^1(\mathrm{Cone}(\beta)^*)\to \bigoplus^s_{i=1}(\mathbb{Z}/a_i\mathbb{Z})\to 0
  \end{align*}
  is exact, we obtain $G_\beta=\mathbb{G}^r_m\times \prod\nolimits_{i=1}^s \mu_{a_i}$ and $\mathcal{X}_{\Sigma,\beta}=BG_\beta$.
  If $r\geq 1$, the cokernel of $\beta$ is not finite, so we apply \hyperref[mainthm]{Theorem\,\ref*{mainthm}(2)}.
  Since $\mathrm{Sat}_N(\beta(0))=\bigoplus\nolimits_{i=1}^s(\mathbb{Z}/a_i\mathbb{Z})$, it follows that $N_0=\mathbb{Z}^r$ and that
  $\beta_1\colon 0\to \bigoplus\nolimits_{i=1}^s(\mathbb{Z}/a_i\mathbb{Z})$ is the zero map. For the non-strict stacky fan $(\Sigma=0, \beta_1)$, since $F=0$ and $B=\beta_1=0$, we have
  \begin{align*}
    \begin{pmatrix}
      FB^T \\ Q^T
    \end{pmatrix}
    =
    \begin{pmatrix}
      a_1 & & \\
       & \ddots  & \\
       &  & a_s
    \end{pmatrix}
  \end{align*}
  and thus $\mathrm{SR}(0,\beta_1)\cong \mathbb{Z}[y_1,\dots,y_s]/(a_1y_1,\dots,a_sy_s)$. Therefore, \hyperref[mainthm]{Theorem\,\ref*{mainthm}(2)} implies
  \begin{align*}
    A^*(BG_\beta)\cong \mathbb{Z}[z_1,\dots,z_r,y_1,\dots,y_s]/(a_1y_1,\dots,a_sy_s)\cong \mathrm{Sym}(D(G_\beta)).
  \end{align*}
  This agrees with \hyperref[diagsym]{Lemma\,\ref*{diagsym}}, although it does not provide a proof of the lemma since \hyperref[mainthm]{Theorem\,\ref*{mainthm}} relies on it.
\end{exam}

\begin{exam}
  We provide a calculation for an example of a fantastack.
    
  \begin{center}
    \begin{tikzpicture}
      \fill[gray!30] (-1.2,-1.2) rectangle (1.5,1.5);
      \draw[help lines, step=0.6] (-1.5,-1.5) grid (1.5,1.5);
      \draw[-] (-1.2,-1.2) -- (1.5,-1.2);
      \draw[-] (-1.2,-1.2) -- (-1.2,1.5);
      \draw [fill=white](1.2,0) circle[radius=5pt];
      \draw [fill=white](0,-1.2) circle[radius=5pt];
      \draw [fill=white](-1.2,0.6) circle[radius=5pt];
      \node at (1.2,0)  {$\scriptstyle 3$};
      \node at (0,-1.2)  {$\scriptstyle 1$};
      \node at (-1.2,0.6)  {$\scriptstyle 2$};
    \end{tikzpicture}
  \end{center}
  Here, we illustrate the fan $\Sigma$ in the lattice $N\cong \mathbb{Z}^2$ and denote the image of $e_i$ under $\beta\colon \mathbb{Z}^3\to N$ in $N$ by the number $i$, as in \cite[Notation\,4.8]{GS1}.
  Since a single cone contains all the $\beta(e_i)$, we have $X_{\hat{\Sigma}}=\mathbb{A}^2$.  The sequence
  \begin{align*}
    \mathbb{Z}^2\xrightarrow{\beta^*=\begin{psmallmatrix} 2 & 0 \\ 0 & 3 \\ 4 & 2 \end{psmallmatrix}}\mathbb{Z}^2\xrightarrow{\begin{psmallmatrix} 6 & 2 & -3 \\ 1 & 0 & 0 \end{psmallmatrix}}\mathbb{Z}\oplus \mathbb{Z}/2\mathbb{Z}\to 0
  \end{align*} 
  is exact, and thus we obtain $\mathcal{F}_{\Sigma,\beta}=[\mathbb{A}^3/_{\begin{psmallmatrix}6&2&-3\\1&0&0\end{psmallmatrix}}\mathbb{G}_m\times \mu_2]$. We have $I=0$ and $J=(2x_1+4x_3,3x_2+2x_3)$. Therefore, \hyperref[corfanta]{Corollary\,\ref*{corfanta}} implies that
  \begin{align*}
    A^*(\mathcal{F}_{\Sigma,\beta})\cong \mathbb{Z}[x_1,x_2,x_3]/(2x_1+4x_3,3x_2+2x_3)\cong \mathbb{Z}[s,t]/(2t).
  \end{align*}
\end{exam}

\appendix
\section*{Appendix A: Proof of Lemma 3.2.2}\label{app}
\addcontentsline{toc}{section}{Appendix A: Proof of Lemma 3.2.2}

\begin{proof}
  First, we show that $A^*(B\mathbb{G}_m)\cong \mathbb{Z}[s]$. Let $\mathbb{A}^l$ be a representation of $\mathbb{G}_m$, where the action is defined by $s\cdot(x_1,\dots,x_l)=(sx_1,\dots,sx_l)$.
  Set $U=\mathbb{A}^l\backslash\{0\}$. Then $\mathbb{G}_m$ acts freely on $U$, and $\mathrm{codim}(\mathbb{A}^l\backslash U)=\mathrm{codim}(V(x_1,\dots,x_l))=l$.
  Note that $U/\mathbb{G}_m\cong \mathbb{P}^{l-1}$. Therefore, for any $i$ satisfying $l>0-(-i)=i$, we have by definition
  \begin{align*}
    A^i(B\mathbb{G}_m)=A^i_{\mathbb{G}_m}(\mathrm{pt})= A^{\mathbb{G}_m}_{-i}(\mathrm{pt})=A_{-i+l-1}(U/\mathbb{G}_m)=A^i(\mathbb{P}^{l-1})=(\mathbb{Z}[s]/(s^l))_i.
  \end{align*}
  Thus, since $s^l$ does not affect the degree $i$ part, we obtain $A^*(B\mathbb{G}_m)\cong \mathbb{Z}[s]$.

  Next, we show that $A^*(B\mu_a)\cong \mathbb{Z}[t]/(at)$. As in the previous case, let $\mathbb{A}^l$ be a representation of $\mu_a$ with the action defined by $t(x_1,\dots,x_l) = (tx_1,\dots,tx_l)$, and set $U=\mathbb{A}^l\backslash \{0\}$.
  We determine the quotient $U/\mu_a$. Let $U_i = D(x_i) = \mathrm{Spec}\,k[x_1,\dots,x_i^{\pm 1},\dots,x_l]$. These $U_i$ form a $\mu_a$-invariant open cover of $U$.
  Indeed, the action of $\mu_a$ on $\mathbb{A}^l$ corresponds to the action $k[x_1,\dots,x_l]\to k[t]/(t^a-1)\otimes k[x_1,\dots,x_l]$ given by $x_j\mapsto t\otimes x_j$.
  In the ring $k[t]/(t^a-1)\otimes k[x_1,\dots,x_i^{\pm 1},\dots,x_l]$, the element $t\otimes x_i$ has an inverse $t^{a-1}\otimes x_i^{-1}$. Therefore, by the universal property of localization, the coaction extends to a map
  \begin{align*}
    \sigma_i\colon k[x_1,\dots,x_i^{\pm 1},\dots,x_l]\to k[t]/(t^a-1)\otimes k[x_1,\dots,x_i^{\pm 1},\dots,x_l].
  \end{align*}
  Thus, it suffices to compute the local quotients $U_i/\mu_a$ and glue them together.
  The quotient $U_i/\mu_a$ is determined by the invariant subring under $\sigma_i$. Since $\sigma_i(x_1^{e_1}\cdots x_l^{e_l})=t^{\sum e_k}\otimes x_1^{e_1}\cdots x_l^{e_l}$, a monomial is invariant if and only if $\sum e_k= ma$ for some integer $m$.
  In this case, we can rewrite the monomial as $x_1^{e_1}\cdots x_l^{e_l}= (x_i)^{ma}(x_1/x_i)^{e_1}\cdots (x_l/x_i)^{e_l}$. Therefore, if we set $X_{i,k}\coloneqq x_k/x_i$ (for $k\neq i$) and $u_i \coloneqq x_i^a$, it follows that the invariant ring is $k[X_{i,1},\dots,X_{i,l},u_i^{\pm 1}]$.
  Thus, we obtain $U_i/\mu_a = \mathrm{Spec}\,k[X_{i,1},\dots,X_{i,l},u_i^{\pm 1}]$.
  Since the transition relation is given by $u_i = (X_{j,i})^a u_j$, gluing the affine schemes $\mathrm{Spec}\,k[X_{i,1},\dots,X_{i,l},u_i]$ yields the line bundle $\mathcal{O}(-a)$ over $\mathbb{P}^{l-1}$.
  Consequently, the quotient $U/\mu_a$ is identified with the complement of the zero section $s\colon \mathbb{P}^{l-1}\to \mathcal{O}(-a)$ in $\mathcal{O}(-a)$.
  Consider the exact sequence
  \begin{align*}
    A^{i-1}(\mathbb{P}^{l-1})\stackrel{s_*}{\longrightarrow} A^i(\mathcal{O}(-a))\longrightarrow A^i(U/\mu_a)\to 0.
  \end{align*}
  Let $t=c_1(\mathcal{O}(1))\in A^*(\mathbb{P}^{l-1})$. Since the pullback $s^*\colon A^i(\mathcal{O}(-a))\to A^i(\mathbb{P}^{l-1})$ is an isomorphism, we can compute the first map via this identification.
  The composition $A^{i-1}(\mathbb{P}^{l-1})\to A^i(\mathcal{O}(-a))\stackrel{\sim}{\to} A^i(\mathbb{P}^{l-1})$ is given by $\alpha\mapsto s^*s_*\alpha = c_1(\mathcal{O}(-a))\cdot\alpha=-at\alpha$,
  therefore, $A^i(U/\mu_a)\cong (\mathbb{Z}[t]/(t^l,at))_i$. Under the assumption $l>i$, we have 
  \begin{align*}
    A^i(B\mu_a)=A^i_{\mu_a}(\mathrm{pt})=A^{\mu_a}_{-i}(\mathrm{pt})=A_{-i+l}(U/\mu_a)=A^i(U/\mu_a)
  \end{align*}
  and conclude that $A^*(B\mu_a)\cong \mathbb{Z}[t]/(at)$.

  Finally, we consider the general case. Let $G=\mathbb{G}_m^r\times \mu_{a_1}\times\cdots\times \mu_{a_s}$ be the algebraic group corresponding to $\mathbb{Z}^r\oplus \mathbb{Z}/a_1\mathbb{Z}\oplus \cdots \oplus \mathbb{Z}/a_s\mathbb{Z}$ and $\mathbb{A}^{l(r+s)}$ be a representation of $G$, where the action $G\times \mathbb{A}^{l(r+s)}\to \mathbb{A}^{l(r+s)}$ is defined by
  \begin{align*}
    (s_1,\dots,s_r,t_1,\dots,t_s) \cdot
    \begin{pmatrix}
      x_{1,1}   & \cdots & x_{l,1}   \\  
      \vdots    &        & \vdots    \\
      x_{1,r}   & \cdots & x_{l,r}   \\
      x_{1,r+1} & \cdots & x_{l,r+1} \\
      \vdots    &        & \vdots    \\
      x_{1,r+s} & \cdots & x_{l,r+s}
    \end{pmatrix}
    =
    \begin{pmatrix}
      s_1x_{1,1}   & \cdots & s_1x_{l,1}   \\  
      \vdots       &        & \vdots       \\
      s_rx_{1,r}   & \cdots & s_rx_{l,r}   \\
      t_1x_{1,r+1} & \cdots & t_1x_{l,r+1} \\
      \vdots       &        & \vdots       \\
      t_sx_{1,r+s} & \cdots & t_sx_{l,r+s}
    \end{pmatrix}.
  \end{align*}
  Let $U$ be the open subset of $\mathbb{A}^{l(r+s)}$ defined by $U=\bigcap U_i$, where $U_i=D(x_{1,i},\dots,x_{l,i})$. We see that $G$ acts freely on $U$ and $\mathrm{codim}(\mathbb{A}^{l(r+s)}\backslash U) = l$.
  By construction, $U=\mathbb{A}^l\backslash\{0\}\times \cdots \times \mathbb{A}^l\backslash\{0\}$, and from the definition of the action, we see that 
  \begin{align*}
    U/G\cong \mathbb{P}^{l-1}\times \cdots \times \mathbb{P}^{l-1}\times (\mathcal{O}(-a_1)\backslash \mathbb{P}^{l-1})\times \cdots \times (\mathcal{O}(-a_s)\backslash \mathbb{P}^{l-1}).
  \end{align*}
  Here, there are $r$ copies of $\mathbb{P}^{l-1}$, and $\mathcal{O}(-a_i)\backslash \mathbb{P}^{l-1}$ is the complement of the zero section.
  Since $\mathbb{P}^{l-1}$ and each $\mathcal{O}(-a_j)\backslash \mathbb{P}^{l-1}$ are toric varieties, we obtain
  \begin{align*}
    A^*(U/G)\cong \mathbb{Z}[s_1,\dots,s_r,t_1,\dots,t_s]/(s_1^l,\dots,s_r^l,a_1t_1,\dots,a_st_s,t_1^l,\dots,t_s^l).
  \end{align*}
  For any $i$ satisfying $l>i$, $A^i(BG)=A^i(U/G)$ and thus we obtain 
  \begin{align*}
    A^*(BG)\cong \mathbb{Z}[s_1,\dots,s_r,t_1,\dots,t_s]/(a_1t_1,\dots,a_st_s).
  \end{align*}
\end{proof}

\bibliography{AGmath}
\bibliographystyle{amsalpha}

\address{Department of Mathematics, Graduate School of Sciences, the University of Osaka, Toyonaka, Osaka 560-0043, JAPAN}
\email{u956309e@ecs.osaka-u.ac.jp}

\end{document}